\newcommand{\mc}{\mathcal}
\newcommand{\mf}{\mathfrak}
\newcommand{\C}{{\mathbb C}}
\newcommand{\Z}{{\mathbb Z}}
\newcommand{\R}{{\mathbb R}}
\newcommand{\N}{{\mathbb N}}
\newcommand{\U}{{\mathbb U}}
\renewcommand{\P}{{\mathbb P}}
\newcommand{\E}{{\mathbb E}}
\newcommand{\eps}{\varepsilon}
\newcommand{\Lap}{\Delta}
\newcommand{\vertiii}[1]{{\left\vert\kern-0.25ex\left\vert\kern-0.25ex\left\vert #1 
    \right\vert\kern-0.25ex\right\vert\kern-0.25ex\right\vert}}
\DeclareMathOperator{\Harm}{Harm}
\DeclareMathOperator{\Hom}{Hom}
\DeclareMathOperator{\Id}{Id}
\DeclareMathOperator{\Tr}{Tr}
\numberwithin{equation}{section}
\numberwithin{figure}{section}
  \theoremstyle{remark}
  \newtheorem*{rem*}{\protect\remarkname}
  \theoremstyle{plain}
  \newtheorem*{prop*}{\protect\propositionname}
  \theoremstyle{plain}
  \newtheorem*{lem*}{\protect\lemmaname}
\theoremstyle{plain}
\newtheorem{thm}{\protect\theoremname}
 \theoremstyle{definition}
 \newtheorem*{defn*}{\protect\definitionname}
  \theoremstyle{plain}
  \newtheorem{prop}[thm]{\protect\propositionname}
  \theoremstyle{remark}
  \theoremstyle{definition}
  \theoremstyle{plain}
  \theoremstyle{plain}
  \newtheorem{lem}[thm]{\protect\lemmaname}
  \theoremstyle{plain}
  \newtheorem*{thm*}{\protect\theoremname}
  \providecommand{\corollaryname}{Corollary}
  \providecommand{\definitionname}{Definition}
  \providecommand{\lemmaname}{Lemma}
  \providecommand{\propositionname}{Proposition}
  \providecommand{\remarkname}{Remark}
  \providecommand{\theoremname}{Theorem}
\providecommand{\theoremname}{Theorem}
\begin{document}

\title{Asymptotics of height change on toroidal Temperleyan dimer models}

\author{Julien Dub\'edat %
and Reza Gheissari}

\address{Department of Mathematics, Columbia University. 2990 Broadway, New
York, NY 10027, USA. }

\email{dubedat@math.columbia.edu \protect\footnote{Partially supported by NSF grant DMS-1005749} }

\address{Courant Institute of Mathematical Sciences, New York University, 251
Mercer St. New York, NY 10012 }

\email{reza@cims.nyu.edu}
\begin{abstract}
The dimer model is an exactly solvable model of planar statistical mechanics. In its critical phase, various aspects of its scaling limit are known to be described by the Gaussian free field. For periodic graphs, criticality is an algebraic condition on the spectral curve of the model, determined by the edge weights \cite{KOS}; isoradial graphs provide another class of critical dimer models, in which the edge weights are determined by the local geometry.

In the present article, we consider another class of graphs: general Temperleyan graphs, i.e. graphs arising in the (generalized) Temperley bijection between spanning trees and dimer models. Building in particular on Forman's formula and representations of Laplacian determinants in terms of Poisson operators, and 
under a minimal assumption - viz. that the underlying random walk converges to Brownian motion - we show that the natural topological observable on macroscopic tori converges in law to its universal limit, i.e. the law of the periods of the dimer height function converges to that of the periods of a compactified free field.

\tableofcontents{}
\end{abstract}
\maketitle

\section{Introduction}

The dimer model is a classical and extensively studied model of (planar) statistical mechanics, see e.g. the survey \cite{Ken_IAS}. Given an underlying graph ${\mc G}$, a dimer configuration (or perfect matching) is a subset of vertex-disjoint edges covering the graph; we consider here only the case where ${\mc G}$ is bipartite. For finite planar graphs, the model is exactly solvable, in the sense that its partition function can be represented as the determinant  of a modified adjacency matrix, the Kasteleyn matrix \cite{Kas_square}. More generally, for a graph on a genus $g$ surface, the partition function is a linear combination of $4^g$ such matrices, see \cite{Kas_square,Tesler,CimRes,CimResII}.

Exact solvability has allowed for a detailed analysis of large scale behavior of the dimer model. Following Thurston \cite{Thu_Con}, a dimer configuration on ${\mc G}$ may be mapped to a height function on ${\mc G}^*$; this gives a way to think about scaling limits of the dimer model (as the mesh of the graph goes to zero). When ${\mc G}$ is embedded on a torus, the height function is additively multivalued (i.e. picks up an additive constant when tracked along a non-contractible cycle on the torus) and can be decomposed into two components: an affine multivalued part (or instanton component, corresponding to the height change), and a  single-valued part (scalar fluctuation); see Section \ref{sssec:Dimers} or \cite{Dub_tors} for a more precise discussion.

For periodic graphs in the plane, \cite{KOS} shows the existence of three phases: a deterministic solid phase, an exponentially decorrelated gaseous phase, and a ``critical" liquid phase. We will focus here on that critical phase, where correlations have power law decay and the large-scale behavior of the height function is described by the Gaussian Free Field (GFF).

For critical dimer models, one expects the height fluctuations to be universal and described by a (compactified) free field. For specific lattices, Kenyon showed convergence of scalar fluctuations on the square lattice to the GFF in \cite{Ken_domino_GFF}, and Boutillier and de Tili\`ere established convergence of the height change on the hexagonal lattice in \cite{BdT_loop}. So far, universality results on dimers have focussed on two classes of graphs.

The first class consists of {\em periodic} graphs (where a microscopic finite fundamental domain is repeated many times to fill the plane or a macroscopic torus). The scalar fluctuations are shown to converge to a GFF in \cite{KOS} (for a proper choice of embedding); recently, \cite{KSW_torus} showed that the height change on tori also converges to its universal limit.

The second class consists of {\em isoradial} graphs (see \cite{Ken_isoradial}) derived from a lozenge tiling. These are not necessarily periodic but instead have a Yang-Baxter--type solvability. Universality for the height function in the plane is shown in \cite{dT_isoradial}. In the more restrictive set-up of so-called Temperleyan isoradial graphs, the full distribution of the height on tori is obtained in \cite{Dub_tors} (the scalar fluctuations and height change are asymptotically independent); see the recent \cite{Li_isoradial} for simply-connected domains.

A natural question is whether universality can be extended to other classes of graphs; a difficulty is that critical behavior is highly sensitive to edge weights, as exemplified by \cite{Chh_off}. In the present article, we undertake the analysis of dimers on {\em Temperleyan} graphs. This is a class of graphs ${\mc G}$ derived from a generic planar (or surface) graph $\Gamma$, in such a way that dimer configurations on ${\mc G}$ correspond to spanning trees on $\Gamma$ (generalized Temperley bijection, \cite{KPW}); ${\mc G}$ is obtained by superimposing $\Gamma$ with its dual $\Gamma^*$. Remark that these three classes of graphs are not disjoint (e.g. there are graphs which are periodic, Temperleyan and not isoradial etc.).

Since spanning trees can be derived from random walks (Wilson's algorithm, \cite{Wi}), a natural (and essentially minimal) criticality condition is that the underlying random walk (RW) on $\Gamma$ converges, up to time change, to Brownian motion (BM). Under this assumption, Yadin and Yehudayoff \cite{yadin_lerw} showed convergence of the Loop-Erased Random Walk (LERW) to SLE$_2$ (Schramm-Loewner Evolution with $\kappa=2$), extending celebrated work of Lawler, Schramm, Werner on regular lattices \cite{LSW_LERW}.

Since dimers on ${\mc G}$ correspond to Uniform Spanning Trees (USTs) on $\Gamma$, and USTs can be generated from LERWs (\cite{Wi}), this suggests universality of dimers when the underlying RW converges to BM. Remark also that the dimer height can be expressed in terms of windings of LERWs (\cite{KPW}). However at this stage it is unclear how to implement rigorously this heuristic.

In the present article, we thus focus on the height change of dimers on a torus, when the underlying random walk converges to BM; our main result, Theorem \ref{Thm:discrg}, establishes that this quantity is indeed asymptotically universal. The limiting distribution is consistent with the conjectured compactified free field invariance principle, see e.g. the discussion following Theorem \ref{Thm:discrg} or \cite{Dub_abelian}. The method is based not on Kasteleyn enumeration, but on Temperley's bijection and Forman's formula \cite{Forman_graph}, a deformation of the Matrix-Tree Theorem enumerating spanning trees in terms of Laplacian determinants. 

As already mentioned, there is some overlap with earlier results on the height change of dimers on tori, although the assumptions and techniques presented here are quite distinct. The case of the hexagonal lattice (on tori with purely imaginary modulus) was treated in \cite{BdT_loop}. Temperleyan isoradial dimers were analyzed in \cite{Dub_tors}, where the relation to the compactified free field is also pointed out. Universality for cycle-rooted spanning forests on surfaces (more precisely, the topology of their roots) was obtained by Kassel and Kenyon in \cite{KasKen}, under a rather strict convergence condition (roughly speaking, $C^1$ convergence of the Green kernel; we work here under \eqref{eq:weakconv}, which does not even guarantee its $C^0$ convergence). Combining ideas of \cite{BdT_loop} and \cite{KOS}, \cite{KSW_torus} described the limiting distribution of the height change in the periodic case.

Some components of the argument subsist in higher genus, in particular the universality of ratios of Laplacian determinants (Proposition \ref{Prop:detconv}). This in turn gives information on the homology of root cycles of cycle-rooted spanning forests. However, the topology of these cycles and their relation with dimers is more complicated than in genus 1, and it is unclear whether there is a concise description of a limiting topological observable in that case. Remark also that more information can be extracted from Kenyon's vector bundle Laplacian \cite{Ken_VBL}.

The manuscript is organized as follows. Section 2 presents the relevant objects and provides background. The main results are stated in Section 3. Universality of the limit is established in Section 4; explicit computations for the square lattice are provided in Section 5, and the limiting distribution is identified in Section 6.\\

{\bf Acknowledgments.} It is our pleasure to thank anonymous referees for their detailed and insightful comments.

\section{Set-up and notation}

\subsection{Basic structures}

\subsubsection{Underlying graphs}

We start from a finite graph $\Gamma=(V,E)$ embedded on a torus $\Sigma\stackrel{\rm def}=\C/(\Z+\tau\Z)$, where the modulus $\tau$ of the torus is in the upper half-plane: $\Im\tau>0$. The embedded edges do not cross (except at their endpoints) and bound faces homeomorphic to disks.

The graph $\Gamma$ is endowed with nonnegative edge weights (conductances) on oriented
edges ($c(xy)\geq 0$, where $(xy)$ is an oriented edge); we do {\em not} assume $c(xy)=c(yx)$. For example, non-symmetric weights allow to treat the case of dimers of the hexagonal lattice, as shown in \cite{KPW}; and of some models of random walks in random environments (e.g. for balanced environments).

Let $\Gamma^*=(V^*,E^*)$ be the dual graph of $\Gamma$, so that faces of $\Gamma$ are in bijection with vertices of $\Gamma^*$ and vice versa: two vertices in $\Gamma^*$ are adjacent if they correspond to faces of $\Gamma$ sharing a common boundary edge. If $e\in E$ is an edge separating the faces $f$ and $f'$ of $\Gamma$, then $e^*=(ff')\in E^*$ denotes its dual edge. All edges on $\Gamma^*$ are assigned weight~1.

Finally we consider the graph ${\mc G}$ obtained by superimposing $\Gamma$ and $\Gamma^*$. More precisely, ${\mc G}=(V_{{\mc G}},E_{{\mc G}})$ is the bipartite graph whose black vertices $V_{\mc G}^B$ are in bijection with $V\sqcup V^*$ and white vertices $V_{\mc G}^W$ are in bijection with $E^*\simeq E$. See Figure \ref{Fig:tempgraph}.
\begin{figure}[htb]
\begin{center}
\leavevmode
\includegraphics[width=0.8\textwidth]{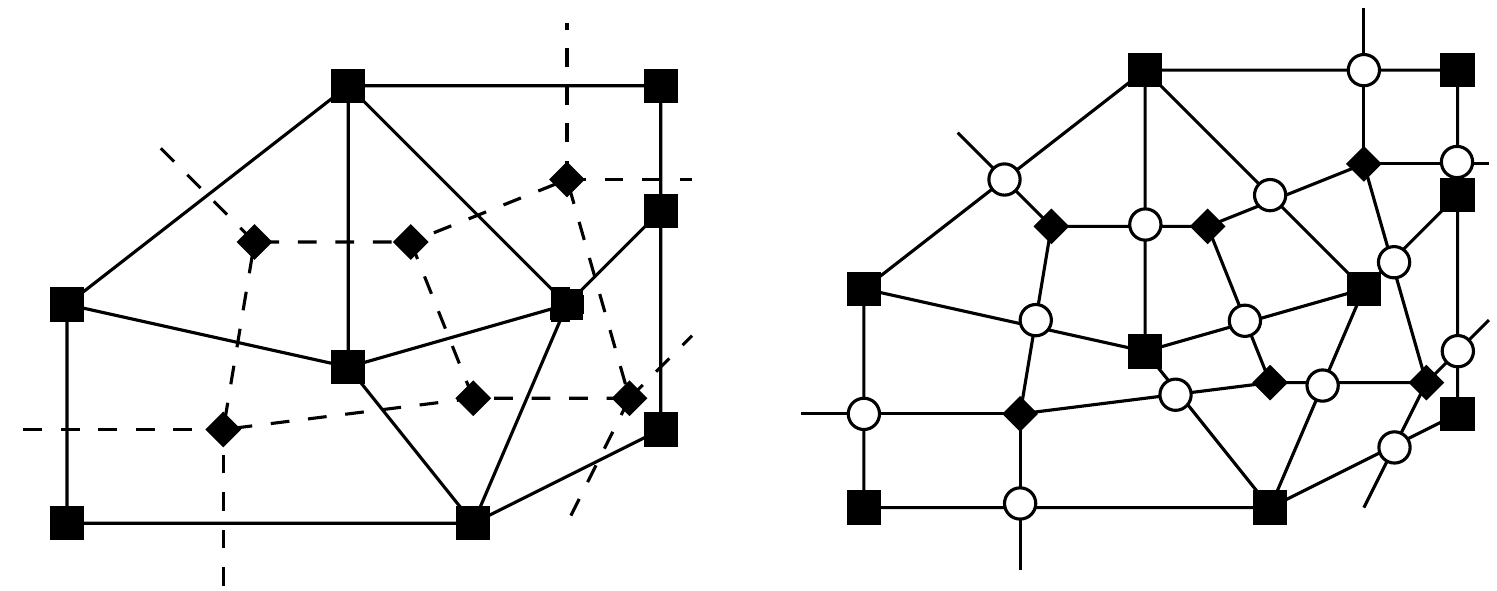}
\end{center}
\caption{Left: portion of $\Gamma$ (solid) and its dual $\Gamma^*$ (dashed). Right: bipartite graph ${\mc G}$.}
\label{Fig:tempgraph}
\end{figure}

Edges in ${\mc G}$ are of two types: $(v,(vv'))$ and $(f,(ff'))$ where $v,v'$ are adjacent in $\Gamma$ and $f,f'$ are adjacent in $\Gamma^*$. In other words, each edge in ${\mc G}$ corresponds to an oriented edge in $\Gamma$ or $\Gamma^*$. Notice that ${\mc G}$ is a quadrangulation, i.e. all its faces have degree 4. We call such a graph ${\mc G}$ {\em Temperleyan} \cite{KPW}.

One may associate weights to edges of ${\mc G}$ in the following way:
\begin{align*}
w(v,(vv'))&=c(vv')&{v,v'\in V}, v\sim v',\\
w(f,(ff'))&=1&{f,f'\in V^*}, f\sim f'.
\end{align*}
Notice than some edges may have zero weight.

Remark that by Euler's formula for $\Gamma$,
$$|V_{\mc G}^B|-|V_{\mc G}^W|=|V|-|E|+|V^*|=2-2g$$
where $g$ is the genus of $\Sigma$; in particular ${\mc G}$ is balanced iff $\Sigma$ is a torus. %

\subsubsection{Spanning forests}

A {\em vector field} (without zeroes) on $\Gamma$, in the sense of Forman, is an assigment $Y:V\rightarrow V$ s.t. $v\sim Y(v)$ for all $v$. Since $\Gamma$ is finite, $(Y^k(v))_{k\geq 0}$ is eventually periodic for any $v$. It is then easy to see that the data of a vector field is equivalent to that of an oriented {\em cycle-rooted spanning forest} (CRSF), in the sense of Kenyon, viz. an oriented spanning subgraph in which each connected component contains exactly one cycle; other edges are oriented towards the unique cycle (``root") in their connected component. A CRSF is {\em incompressible} \cite{Ken_VBL} if all its root cycles are noncontractible on $\Sigma$. Unless mention of the contrary, we will only consider CRSFs which are {\em oriented} and {\em incompressible}.

Given a CRSF $F$ on $\Gamma$, one can construct a dual CRSF $F^*$ on $\Gamma^*$, s.t. each $w\in V^W_{\mc G}\simeq E\simeq E^*$ is crossed either by an edge of $F$ or of $F^*$. If $F$ has $k$ root cycles, they are in the same homology class, up to sign; $F^*$ also has $k$ root cycles in the same class and is specified by $F$ up to the orientation of these $k$ cycles, which may be chosen freely. (This property is specific to incompressible CRSFs on tori).

\subsubsection{Dimers}\label{sssec:Dimers}

A {\em dimer configuration} or {\em perfect matching} on ${\mc G}$ is a subset ${\mf m}$ of edges of ${\mc G}$ s.t. each vertex in $V_{\mc G}$ is an endpoint of exactly one edge in  ${\mf m}$. The weight of a dimer configuration is the product of the weights of edges it contains:
$$w({\mf m})=\prod_{e\in{\mf m}}w(e).$$
There is then a natural probability measure on the space of dimer configurations, viz. the Boltzman measure given by
\begin{align*}
\mu\{{\mf m}\}=  \frac{\prod_{e\in{\mf m}}w(e)}{{\mc Z}}
\end{align*}
where the {\em partition function} is 
\begin{align*}
{\mc Z}= & \sum_{{\mf m}}\prod_{e\in {\mf m}}w(e).
\end{align*}

Following Thurston \cite{Thu_Con}, one can map a dimer configuration on ${\mc G}$ to a \emph{height function} on the dual graph ${\mc G}^*$. The height function is defined as follows (see \cite{Ken_IAS}): Given a perfect matching
${\mf m}$, define a 1-form $\omega_{{\mf m}}$ (i.e. an antisymmetric function on oriented edges) on the dual graph ${\mc G}^*$
given by 
\begin{align*}
\omega_{{\mf m}}((bw)^*)= & \left\{ \begin{array}{cc}
1 & \mbox{if b and w are matched}\\
0 & \mbox{otherwise.}
\end{array}\right.
\end{align*}
Here $(bw)$ denotes an oriented edge of ${\mc G}$ - oriented from black to white - and $(bw)^*$ is the edge of ${\mc G}^*$ dual to $(bw)$, oriented so that $((bw),(bw)^*)$ is a direct frame.

Define $(d\omega_{\mf m})(f)$ as the sum of $\omega_{{\mf m}}(e)$ over edges $e$ bounding the face $f$ of ${\mc G}^*$, taken counterclockwise; recall that faces of ${\mc G}^*$ correspond to vertices of ${\mc G}$. It is straightforward to check that $d\omega_{{\mf m}}(f)$ is $1$ (resp. $-1$) if $f$ corresponds to black (resp. white) vertex of ${\mc G}$.

Consider a reference matching ${\mf m}_0$.
Let $\omega_0$ be the corresponding $1$-form. Then $d(\omega-\omega_0)=0$.

Consequently, locally we can write $\omega_{{\mf m}}-\omega_0=dh$ (i.e. $(\omega_{{\mf m}}-\omega_0)(vv')=h(v')-h(v)$), where $h$ is a {\em height function} defined on ${\mc G}^*$ (given up to an additive constant). 

Because the torus has non-trivial homology, $h$ need not be defined as a function on all of ${\mc G}^*$, but merely as an additively multivalued function (i.e. picking an additive constant when traced along a non-contractible cycle). Concretely, one may lift ${\mc G}^*$ and the matchings to the universal cover $\C$ of $\Sigma$, and then $h$ defines an additively quasi-periodic function in the following sense: there are $a,b$ such that for all vertex $v$ 
in the lift of ${\mc G}^*$ and $m,n\in\Z$, 
$$h(v+m+n\tau)=h(v)+ma+nb.$$

A standard basis of the homology group $H_1(\Sigma,\Z)$ is given by the cycle $A:t\mapsto t$ and the cycle $B:t\mapsto t\tau$, $t\in\R/\Z$.

Of particular interest to us will be the {\em periods} $(a,b)=(\int_A dh,\int_B dh)$, where
\begin{equation}\label{eq:period}
\int_A dh=\sum_{e\in\gamma}(\omega_{{\mf m}}-\omega_0)(e)
\end{equation}
where $\gamma$ is a lattice cycle on ${\mc G}^*$ homotopic to $A$ (and similarly for $B$ or any element of $H_1(\Sigma,\Z)$). These do not depend on the choice of $\gamma$ (since $d(\omega_{{\mf m}}-\omega_0)=0$); and, by construction, are integers.

The construction explained above is valid for general bipartite graphs. In the present context of Temperleyan graphs, one can circumvent the use of a reference matching ${\mf m}_0$ (or form $\omega_0$) as follows. 

Instead of considering any cycle $\gamma$ running on ${\mc G}^*$, we consider only cycles running to the immediate lefthand side of a cycle on the primary graph $\Gamma$. One verifies that deforming $\gamma$ to $\gamma'$, a cycle homotopic to $\gamma$ satisfying the same conditions, involves crossing the same number of black and white vertices, so that $\int_\gamma\omega$ is independent of the choice of $\gamma$.

In conclusion, a dimer configuration ${\mf m}$ on ${\mc G}^*$ defines an element of $H_1(\Sigma,\Z)^{\vee}$, via
\begin{equation}\label{eq:hperiod2}
[\gamma]\in H_1(\Sigma,\Z)\longmapsto\int_\gamma\omega_{{\mf m}}
\end{equation}
where $M^\vee=\Hom(M,\Z)$ denotes the dual of the $\Z$-module $M$.

\subsubsection{Temperley's bijection}

There is a one-to-one correspondence between pairs $(F,F^*)$ of dual CRSFs on $\Gamma,\Gamma^*$ and perfect matchings ${\mf m}$ on ${\mc G}$, the generalized Temperley's bijection \cite{KPW} (which we present here in the toroidal setting). Set
 \begin{align*}
(v,(vv'))\in{\mf m}\Leftrightarrow (vv')\in F&&{v,v'\in V}, v\sim v'\\
(f,(ff'))\in{\mf m}\Leftrightarrow (ff')\in F^*&&{f,f'\in V^*}, f\sim f'.
\end{align*}
The weight of a CRSF (resp. of a perfect matching) is the product of the weights of edges it contains. Recall that edge weights on $\Gamma^*,{\mc G}$ are derived from those on $\Gamma$. Then Temperley's bijection is also weight-preserving \cite{KPW}. See Figure \ref{Fig:tempbij}.
\begin{figure}[htb]
\begin{center}
\leavevmode
\includegraphics[width=1.0\textwidth]{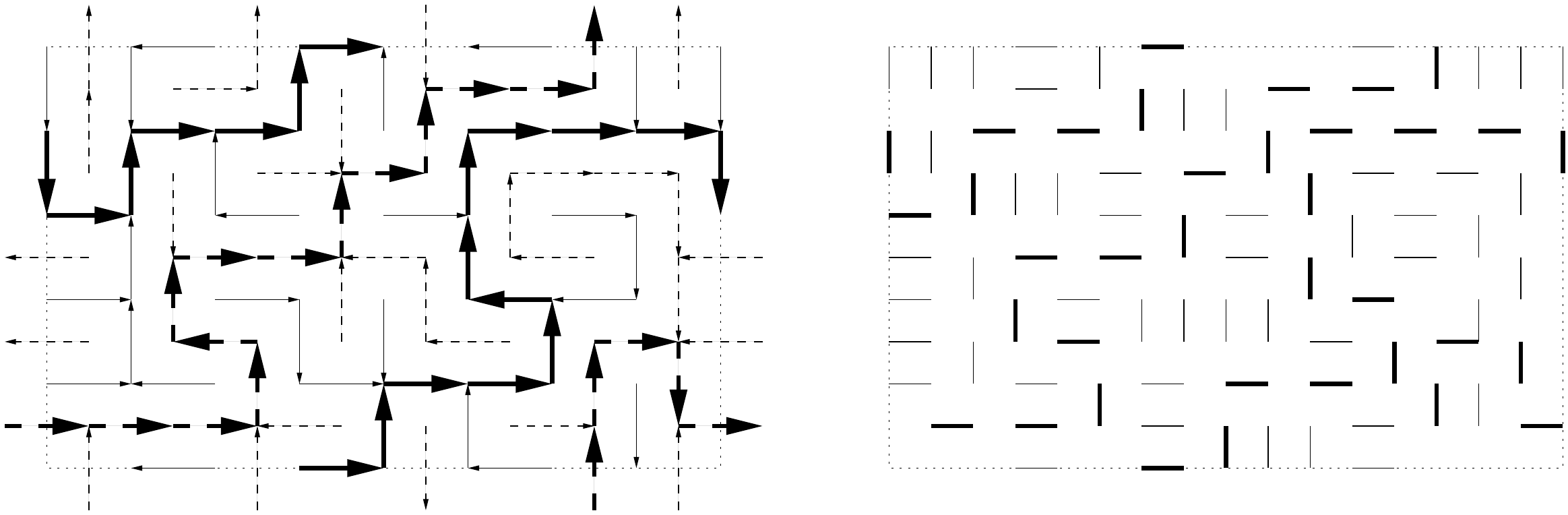}
\end{center}
\caption{Left: a pair of dual CRSFs (solid: primal forest; dashed: dual forest; thick: root cycles; dotted: fundamental domain). Right: corresponding dimer configuration (thick: dimers on the root cycles). Here $[{\mf m}]=[A]+[B]$.}
\label{Fig:tempbij}
\end{figure}

Let us discuss the relation between the height periods defined in \eqref{eq:period} and the pair of CRSFs $(F,F^*)$ corresponding to the matching ${\mf m}$.

Let $[F]+[F^*]\in H_1(\Sigma,\Z)$ be its homology class, defined as the sum of the homology classes of the oriented root cycles of $F,F^*$. Trivially this is an even class (i.e. in $2H_1(\Sigma,\Z)$), and we define 
\begin{equation}\label{eq:cyclehomol}
[{\mf m}]=\frac 12\left([F]+[F^*]\right)\in H_1(\Sigma,\Z).
\end{equation}
Hence, given ${\mf m}$ and the corresponding pair $(F,F^*)$ we defined an element of $H_1(\Sigma,\Z)^\vee$ in \eqref{eq:hperiod2} and an element of $H_1(\Sigma,\Z)$. 

There is a canonical identification $H_1(\Sigma,\Z)^\vee\simeq H_1(\Sigma,\Z)$ via the intersection pairing $\cdot$, the antisymmetric bilinear form such that $[A]\cdot [B]=1$ ($[\gamma_1]\cdot[\gamma_2]$ counts the intersections of $\gamma_1$ and $\gamma_2$ with a sign depending on orientation, see e.g. III.1 in \cite{FarKra}).

Without loss of generality, we may assume that the root cycles are homotopic to $\pm A$, so that $[\mf m]=k[A]$, $k\in\Z$. By choosing a representative of $[A]$ running on ${\mc G}^*$ along a root of $F$, one gets immediately $\int_A \omega_{\mf m}=0$. Then one chooses a representative of $[B]$ running along branches of $F$. A more careful examination shows that $\int_B\omega_{\mf m}=k$. 

In conclusion we have
$$\int_\gamma \omega_{\mf m}=[{\mf m}]\cdot [\gamma]$$ 
so that the points of view of \eqref{eq:hperiod2} and \eqref{eq:cyclehomol} are identified via the intersection pairing.

\subsection{Convergence}

The discussion is so far purely combinatorial, and is for instance invariant under deformation of the embedding. We now introduce an essentially minimal condition that guarantees criticality of a sequence of Temperleyan graphs. We employ ``criticality" in the somewhat loose sense that aspects of the fine mesh limit are described by conformally invariant objects, as e.g. in \cite{Ken_isoradial}. 

Recall that the oriented edges of $\Gamma=(V,E)$ are assigned non-negative weights (conductances). Given these weights, one can introduce a Laplacian $\Delta:\R^{V}\rightarrow\R^V$ %
by setting
$$(\Delta f)(x)=\sum_{y\in V: y\sim x}c(xy)(f(x)-f(y))$$
where $\sim$ denotes adjacency (using the positive Laplacian convention). Then $-\Delta$ is the generator of a continuous-time random walk $(X_t)_{t\geq 0}$, where the conductances give the jump rates. 

To avoid trivialities, we assume that the graph is irreducible in the sense that any two vertices are joined by a chain of edges with positive weights. 

Alternatively, one may consider its discrete-time skeleton, the Markov chain $(X_n)_{n\in\N}$ specified by 
$$\P(X_{n+1}=y|X_n=x)=\frac{c(xy)}{\sum_{y'\sim x}c(xy')}.$$ 
The objects of interest to us - harmonic functions, harmonic measures, Poisson operators, etc. - are invariant under time change, so that the distinction between discrete and continuous time is essentially moot.

The mesh size is
defined as $\delta:=\sup_{x,y\in V:x\sim y}|x-y|$. We will consider a sequence of graphs
$(\Gamma_\delta)_{\delta>0}$ indexed by mesh size, where $\delta$ goes to zero along some sequence. The dependence of various objects on $\delta$ will be omitted when there is no ambiguity.

The key (and essentially minimal) assumption is that, as the mesh goes to zero, the corresponding random walk converges weakly, up to time change, to Brownian motion on $\Sigma$. We do not expect that there is a natural and more general condition for Temperleyan graphs. Among several equivalent specific formulations, we choose the following (essentially as in \cite{yadin_lerw}). 

Let $\tilde X_\delta$ be the continuous process obtained from the random walk $X_\delta$ on $\Gamma_\delta$ by linear interpolation between jump times. Let $x_\delta\in V_\delta$, the starting vertex, be such that $x_\delta\rightarrow 0$. Let $\sigma$ be the time of first exit of, say, $[-1,1]+\tau[-1,1]$ (the union of four fundamental domains; any set containing a fundamental domain in its interior would work). Let $\mu_\delta$ be the law induced on $C([0,1],\C)$ by $t\mapsto \tilde X_\delta(t\sigma)$. The state space $C([0,1],\C)$ is metrized by uniform convergence up to time reparametrization, viz. 
$$d(\gamma_1,\gamma_2)=\inf_{\phi:[0,1]\rightarrow [0,1]}\|\gamma_1-\gamma_2\circ\phi\|_\infty$$
where the infimum is taken over increasing homeomorphisms $\phi:[0,1]\rightarrow[0,1]$. This turns $C([0,1],\C)$ (quotiented by reparametrization) into a Polish space. 
Now let $\mu$ be the law induced by $(B_{t\sigma})_{0\leq t\leq 1}$, where $B$ is a standard Brownian motion. The condition is then simply that 
\begin{equation}\label{eq:weakconv}
\mu_\delta{\rm\ converges\ to\ }\mu{\rm\ weakly.} 
\end{equation}

This condition is for instance satisfied for simple random walk on supercritical percolation clusters \cite{BerBis}, among other models of random walks in random environments. Recall also from \cite{KPW} that dimers on the hexagonal lattice correspond to a periodic, non-reversible random walk; our arguments do not rely on any specific properties of reversible random walks.

Using planarity, one may check that the condition \eqref{eq:weakconv} is independent of the choice of starting point (with probability bounded away from 0, a random walk started from $x$ completes a small but macroscopic loop around $y$; on that event we can couple the RW started from $y$ and the RW started from $x$ after completing that loop). We denote by $\P_\delta^x$ the law of the RW on $\Gamma_\delta$, simply denoted by $X$, started from (the point on $V(\Gamma_\delta)$ closest to) $x$. Similarly, $\P^x$ denotes the law of standard BM started from $x$.

For our purposes, a key consequence of the weak convergence condition is the convergence of harmonic measure. For $D\subset\Sigma$ an open set (with Jordan boundary, say), $J\subset\partial D$ and $x\in D$, we denote
\begin{align*}
\Harm_\delta(x,J,D)&=\P_\delta^x(X{\rm\ exits\ }D{\rm\ on\ }J),\\
\Harm(x,J,D)&=\P^x(B{\rm\ exits\ }D{\rm\ on\ }J).
\end{align*}

The following is a weaker version of Lemma 4.8 in \cite{yadin_lerw}. 

\begin{lem}\label{Lem:harmmeas}
Fix $\eta\in (0,1)$ and $\eps>0$. There is $\delta_0$ such that for any $\delta\leq \delta_0$, $x\in\C$, any simply connected Jordan domain $D$ with $B(x,\eta)\subset D\subset B(x,\eta^{-1})$ and arc $J\subset \partial D$, we have 
$$|\Harm_\delta(x,J,D)-\Harm(x,J,D)|\leq\eps.$$
\end{lem}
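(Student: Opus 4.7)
The strategy is to combine the weak convergence assumption \eqref{eq:weakconv} with a continuous-mapping argument on path space, and then to upgrade to the uniform estimate via a compactness/contradiction argument. Throughout I work with the natural lift to the universal cover $\C$ so that domains of macroscopic diameter $\asymp\eta^{-1}$ can be compared freely.

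First I would handle the case of a single triple $(x,D,J)$. For fixed Jordan $D$ with $B(x,\eta)\subset D\subset B(x,\eta^{-1})$, the first-exit functional sending a path in $C([0,1],\C)$ to its exit location on $\partial D$ is continuous (in the reparametrization-quotient topology) at almost every Brownian path: a.s. Brownian motion exits $D$ at a unique point and does so ``transversally'' in the sense that the path immediately leaves $\overline D$ after $\tau_D$, so small perturbations produce small perturbations of the exit point. The event $\{B_{\tau_D}\in J\}$ has an a.s. continuous indicator because Brownian harmonic measure on a Jordan boundary is atomless (so the endpoints of $J$ carry no mass). Transferring \eqref{eq:weakconv} from starting point $0$ to $x$ via the small-loop coupling sketched in the paragraph after \eqref{eq:weakconv}, the continuous mapping theorem yields $\Harm_\delta(x,J,D)\to\Harm(x,J,D)$.

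Second I would upgrade this pointwise convergence to the uniform estimate by contradiction. If the conclusion failed for some $\eta,\eps>0$, there would exist $\delta_n\to 0$ and triples $(x_n,D_n,J_n)$ satisfying the containment hypothesis but with
$$|\Harm_{\delta_n}(x_n,J_n,D_n)-\Harm(x_n,J_n,D_n)|>\eps.$$
Using periodicity of $\Sigma$ to place $x_n$ in a fundamental domain and Blaschke's selection theorem, extract a subsequence along which $x_n\to x_\infty$, $\overline{D_n^c}\to K_\infty$ and $\overline{J_n}\to J_\infty$ in Hausdorff metric. Let $D_\infty$ be the connected component of $K_\infty^c$ containing $x_\infty$; it satisfies $B(x_\infty,\eta)\subset D_\infty\subset B(x_\infty,\eta^{-1})$ but need not be Jordan. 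The goal is to show that both $\Harm_{\delta_n}(x_n,J_n,D_n)$ and $\Harm(x_n,J_n,D_n)$ tend to the same limit $\Harm(x_\infty,J_\infty,D_\infty)$, which contradicts the assumed $\eps$-gap.

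For the Brownian side, this follows from the classical continuity of harmonic measure under Carath\'eodory kernel convergence (applicable because $x_\infty$ is at macroscopic distance from $\partial D_\infty$). For the discrete side, I would sandwich $D_n$ between polygonal inner and outer approximations $D^-\subset D_n\subset D^+$ of $D_\infty$, chosen so that the Brownian harmonic measures of the annular regions $D^+\setminus D_\infty$ and $D_\infty\setminus D^-$, seen from $x_\infty$, are small (possible because $D^\pm$ can be made Carath\'eodory-close to $D_\infty$), and so that analogous smallness holds for the discrete walk by applying the first step to the fixed polygonal domains $D^\pm$. The main obstacle is the stability of harmonic measure under Hausdorff limits when the limit boundary can be irregular: this is manageable because $x_\infty$ is uniformly separated from $\partial D_\infty$, so the exit distribution is governed by macroscopic features of the boundary, but one must handle the endpoints of the arc $J_\infty$ with care, using diffuseness of Brownian harmonic measure to guarantee no mass concentrates there in the limit. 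A secondary subtlety is the transfer of \eqref{eq:weakconv} to arbitrary starting points $x\in\C$, which proceeds by conditioning on the completion of a small macroscopic loop around $x$ as in the remark preceding the lemma.
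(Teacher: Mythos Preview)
The paper does not prove this lemma at all: immediately before the statement it says ``The following is a weaker version of Lemma 4.8 in \cite{yadin_lerw}'', and no argument is given. So there is no in-paper proof to compare against; your proposal is a self-contained attempt to derive the result directly from \eqref{eq:weakconv}. Your Step~1 (pointwise convergence for a fixed $(x,D,J)$ via the continuous mapping theorem, using atomlessness of Brownian harmonic measure at the arc endpoints) is correct and is the standard way to extract such statements from weak convergence.

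The compactness argument in Step~2, however, has a real soft spot. You assert that along the extracted subsequence both $\Harm_{\delta_n}(x_n,J_n,D_n)$ and $\Harm(x_n,J_n,D_n)$ converge to $\Harm(x_\infty,J_\infty,D_\infty)$. But Hausdorff convergence of $\overline{J_n}$ does not track the prime-end structure of the limiting boundary: if the $D_n$ develop a thin slit, an arc $J_n$ running along one side of it has a Hausdorff limit $J_\infty$ that coincides as a point set with the slit itself, and $\Harm(x_\infty,J_\infty,D_\infty)$ then counts both sides, whereas $\lim_n\Harm(x_n,J_n,D_n)$ counts only one. So the specific limit you name can be wrong, and the justification you give (``$x_\infty$ is uniformly separated from $\partial D_\infty$, so the exit distribution is governed by macroscopic features'') does not resolve this --- the two sides of a macroscopic slit \emph{are} a macroscopic feature. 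Your sandwich $D^-\subset D_n\subset D^+$ controls exit \emph{positions} but does not by itself recover which arc of $\partial D_n$ was hit. One clean way around this (and closer to what \cite{yadin_lerw} actually does) is to avoid passing to a limiting domain entirely: use Skorokhod's representation to couple $\tilde X_{\delta_n}$ and $B$ so that their paths are uniformly $o(1)$-close, and argue directly for the fixed domain $D_n$ that the two exit points are within $\eps'$ of each other with probability $\geq 1-\eps'$, using a uniform (Beurling-type) estimate that a Brownian motion started at distance $\geq\eta$ from $\partial D$ spends little time in an $\eps'$-neighborhood of $\partial D$, uniformly over the class $B(x,\eta)\subset D\subset B(x,\eta^{-1})$.
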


Remark that \eqref{eq:weakconv} does not guarantee the type of $C^1$ convergence of Green's functions which has been instrumental in the asymptotic analysis of dimers since~\cite{Ken_domino_conformal}.

\subsection{Twisted Laplacians}

A graph $\Gamma=(V,E)$ on $\Sigma=\C/(\Z+\tau\Z)$ lifts to a graph $\tilde\Gamma=(\tilde V,\tilde E)$ on the universal cover $\C$, and the Laplacian on $\Gamma$ lifts to an operator $\tilde\Lap:\C^{\tilde V}\rightarrow\C^{\tilde V}$. 

The fundamental group operates on $\C^{\tilde V}$ by translations, and that action commutes with $\tilde\Lap$. Consequently $\tilde\Lap$ stabilizes eigenspaces of translations.

Fix a character $\chi:\pi_1(\Sigma)\rightarrow\U$, $\U$ the unit circle (equivalently, choose $\chi(A)$ and $\chi(B)$  in $\U$), and set
$$(\C^{\tilde V})_\chi=\{\phi\in\C^{\tilde V}:\forall\gamma\in\pi_1(\Sigma), \phi(\cdot+\gamma)=\chi(\gamma)\phi(\cdot) \}.$$
We denote by $\Lap_\chi:(\C^{\tilde V})_\chi\rightarrow (\C^{\tilde V})_\chi$ the restriction of $\tilde\Lap$ to $(\C^{\tilde V})_\chi$. Remark that for the trivial character $\chi=1$, there is a natural identification $\Lap_1\simeq\Lap:\C^V\rightarrow\C^V$.

An equivalent formulation \cite{Ken_VBL} identifies $\Lap_\chi$ with the Laplacian operating on the flat line bundle with monodromy given by the character $\chi$.

Concretely, one can fix a fundamental domain for $\Sigma$ in $\C$ corresponding to an $A$- and $B$- cycle (simple paths on the dual graph to $\tilde\Gamma$). For $v\in\tilde V$ in this fundamental domain, set $e_v(v')=\delta_{v,v'}\chi(v'-v)$. This gives a basis of $(\C^{\tilde V})_\chi$. Relative to this basis, the matrix of $\Lap_\chi$ differs from the matrix of $\Lap$ on entries corresponding to edges crossing the cycles. More precisely,
$$(\Lap_\chi)(v,v')=-c(v,v')\chi^\pm(A)$$
if $(vv')$ crosses $A$, with $\pm=1$ if $(A,(vv'))$ is directly oriented and $-1$ otherwise; the corresponding statement holds for the $B$-cycle, and all other entries of the matrix are unchanged.

From the maximum principle it is easy to see that $\Lap_\chi$ has trivial kernel (equivalently, $\det(\Lap_\chi)\neq 0$) iff $\chi$ is non-trivial.

The Matrix-Tree theorem states that the determinant (more precisely, a cofactor) of the discrete Laplacian enumerates (weighted) spanning trees on $\Gamma$. The following is a twisted version of the  Matrix-Tree Theorem, due to Forman \cite{Forman_graph} and, independently, Kenyon \cite{Ken_VBL}.

\begin{thm}\label{Thm:Forman}
Let $\Gamma=(V,E,c)$ be a weighted graph on the torus $\Sigma$, $\chi:\pi_1(\Sigma)\rightarrow\U$ a character, and $\Lap_\chi$ the Laplacian on $\Gamma$. Then
$$\det(\Lap_\chi)=\sum_{F{\rm\ CRSF}}\left(\prod_{e\in F}c(e)\prod_{\gamma{\rm\ cycle\ in\ }F}(1-\chi(\gamma))\right).$$
\end{thm}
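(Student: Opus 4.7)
The plan is to evaluate $\det(\Lap_\chi)$ by a direct Leibniz expansion and to reorganize the resulting sum into one indexed by oriented CRSFs (Forman vector fields). First I would write
$$\det(\Lap_\chi) = \sum_{\sigma \in S_V} \operatorname{sgn}(\sigma) \prod_{v \in V} (\Lap_\chi)_{v,\sigma(v)}$$
and note that non-vanishing contributions require $\sigma(v) = v$ or $\sigma(v) \sim v$. Fixed-point entries satisfy $(\Lap_\chi)_{v,v} = \sum_{y \sim v} c(v,y)$ (the diagonal carries no phase, as the twist only enters via edges crossing the $A$- and $B$-cuts), while off-diagonal entries read $(\Lap_\chi)_{v,\sigma(v)} = -c(v,\sigma(v))\,\chi_{v,\sigma(v)}$, with phases $\chi_{v,w} \in \U$ arranged so that $\prod_{\text{edges of }\gamma} \chi_{v,w} = \chi([\gamma])$ for any oriented lattice cycle $\gamma$.

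The next step is to expand each diagonal term, which amounts to selecting a neighbor $Y(v) \sim v$ for every $v \in \operatorname{Fix}(\sigma)$. Setting $Y(v) = \sigma(v)$ for $v \notin \operatorname{Fix}(\sigma)$ then assembles $Y \colon V \to V$ into a Forman vector field. Conversely, once $Y$ is fixed, the admissible $\sigma$'s are exactly those of the form $\sigma_S$ for some subset $S \subseteq \mc{C}(Y)$ of the cycles of $Y$: on each $\gamma \in S$, $\sigma_S$ acts as $Y$, while on the remaining $Y$-cycles and on the tree part $\sigma_S$ acts as the identity. (This uses the elementary fact that the non-fixed set of any permutation $\sigma$ that agrees with $Y$ off $\operatorname{Fix}(\sigma)$ must be a $Y$-invariant set on which $Y$ acts bijectively, hence a union of cycles of $Y$.) The double sum over $(\sigma, \text{neighbor choices})$ thus becomes a double sum over $(Y, S)$.

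Given $(Y,S)$, the contribution is computed as follows. Each $\gamma \in S$ of length $k$ brings $(-1)^{k-1}$ from the $k$-cycle sign of $\sigma_S$, and $(-1)^k \chi(\gamma) \prod_{e \in \gamma} c(e)$ from the $k$ off-diagonal entries along it; the two signs combine cleanly to give $-\chi(\gamma) \prod_{e \in \gamma} c(e)$. The remaining edges of $Y$---whether they lie in $Y$-cycles outside $S$ or in the tree part---contribute the bare conductances $c(e)$ with neither sign nor twist. Summing over $S$,
$$\sum_{S \subseteq \mc{C}(Y)} \prod_{\gamma \in S} (-\chi(\gamma)) = \prod_{\gamma \in \mc{C}(Y)} (1 - \chi(\gamma)),$$
and identifying Forman vector fields $Y$ with oriented CRSFs $F$ gives the claim. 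Contractible $\gamma$ satisfy $\chi(\gamma) = 1$ and drop out automatically, so the sum is effectively supported on incompressible CRSFs, consistent with the convention recalled above.

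The main obstacle is purely bookkeeping: matching the $k$-cycle sign $(-1)^{k-1}$ of $\sigma_S$ against the $(-1)^k$ arising from the $k$ off-diagonal factors along $\gamma$, and verifying that the reparametrization $(\sigma, \text{neighbor choices}) \leftrightarrow (Y, S)$ is genuinely a bijection. No analytic ingredient (such as the weak convergence hypothesis) enters; the statement is a purely algebraic identity and can be established as outlined above in a few lines.
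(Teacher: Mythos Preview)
The paper does not actually prove this theorem: it is stated as a known result, attributed to Forman \cite{Forman_graph} and (independently) Kenyon \cite{Ken_VBL}, and used as a black box thereafter. So there is no ``paper's own proof'' to compare against.

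Your argument is correct and is essentially the standard combinatorial proof. The two points you flag as the main obstacles are indeed the only ones, and you handle them correctly: the non-fixed set of any admissible $\sigma$ is $\sigma$-invariant, hence $Y$-invariant with $Y$ bijective on it, which forces it to be a union of $Y$-cycles (tree vertices cannot be periodic for $Y$); and the sign computation $(-1)^{k-1}\cdot(-1)^k=-1$ along each selected cycle is right, yielding the factor $-\chi(\gamma)$ and then $\prod_\gamma(1-\chi(\gamma))$ after summing over $S$. The observation that contractible cycles kill the term (so the sum is effectively over incompressible CRSFs) is also correct and matches the paper's convention.
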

In the reversible case ($c(vv')=c(v'v)$ for all $v,v'$), this becomes
$$\sum_{F{\rm\ CRSF}}\left(\prod_{e\in F}c(e)\prod_{\Gamma{\rm\ cycle\ in\ }F}(2-\chi(\gamma)-\chi(\gamma)^{-1}))\right)$$
where the sum is over unoriented CRSFs. There are also extensions of this result~\cite{Ken_VBL}, which will not be of use here.

\section{Statement of Main Results}

The goal is to show convergence of the distribution of $[{\mf m}]$ to an explicit, universal limit under the general assumption that the underlying random walk on $\Gamma_\delta$ converges to Brownian motion \eqref{eq:weakconv} as the mesh goes to zero (along some sequence). The argument has three essentially independent components.

The assumption \eqref{eq:weakconv} is essentially a condition on convergence of harmonic measure (Lemma \ref{Lem:harmmeas}). Our first task is to show that ratios of Laplacian determinants can be expressed in terms of harmonic measure and are consequently universal in the small mesh limit, resulting in 

\begin{prop}\label{Prop:detconv}
Let $\chi,\chi':\pi_1(\Sigma)\rightarrow\U$ be two non-trivial characters. Then
$$\frac{\det(\Lap^{\delta}_{\chi'})}{\det(\Lap^{\delta}_\chi)}\stackrel{\delta\searrow 0}{\longrightarrow}\frac{g(\chi')}{g(\chi)}$$
where $g$ is a function which does not depend on the sequence $(\Gamma_\delta)$ satisfying \eqref{eq:weakconv}.
\end{prop}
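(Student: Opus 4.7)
\emph{Strategy.} The plan is to use a Schur--complement/Poisson--operator reduction to isolate the $\chi$-dependence of $\det\Lap_\chi$ in a boundary operator whose entries are governed by harmonic measure, and then to appeal to \eqref{eq:weakconv} via Lemma \ref{Lem:harmmeas} to force a universal limit for the ratio.

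\emph{Execution.} Fix simple cycles $\gamma_A,\gamma_B$ on the dual graph $\Gamma_\delta^*$ representing the generators of $H_1(\Sigma,\Z)$ and cut $\Sigma$ along $\gamma_A\cup\gamma_B$ to produce a fundamental polygon $\Pi_\delta$. Let $V_\partial\subset V_\delta$ be the primal vertices incident to edges crossing the cuts and set $V_\circ:=V_\delta\setminus V_\partial$. In the basis of $(\C^{\tilde V})_\chi$ described after Theorem \ref{Thm:Forman}, the matrix entries of $\Lap_\chi$ that depend on $\chi$ are exactly those associated with cut-crossing edges; since any such edge has both endpoints in $V_\partial$, the block form
\[
\Lap_\chi=\begin{pmatrix}A_\chi & B\\ C & D\end{pmatrix}
\]
has $B$, $C$, $D$ independent of $\chi$. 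The Schur--complement identity then gives
\[
\det\Lap_\chi=\det(D)\cdot\det P_\chi,\qquad P_\chi:=A_\chi-BD^{-1}C,
\]
so the $\chi$-independent $\det(D)$ cancels, and $\det\Lap_{\chi'}/\det\Lap_\chi=\det P_{\chi'}/\det P_\chi$. The inverse $P_\chi^{-1}=(\Lap_\chi^{-1})|_{V_\partial\times V_\partial}$ has a random-walk interpretation: its entries are sums over boundary-to-boundary walks weighted by transition probabilities together with the monodromy phases $\chi(A)^{\pm1},\chi(B)^{\pm1}$. Equivalently, $P_\chi^{-1}$ is assembled from the discrete Poisson kernel on the cut graph $\Pi_\delta$ together with the appropriate monodromy factors across $\gamma_A,\gamma_B$. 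By Lemma \ref{Lem:harmmeas} these Poisson kernels converge on macroscopic scales to their Brownian analogues on $\Pi$; the limiting boundary operator $P_\chi^{\mathrm{cont}}$ depends on the sequence $(\Gamma_\delta)$ only through $\tau$, and defining $g(\chi)$ as a suitable regularized determinant of $P_\chi^{\mathrm{cont}}$ yields the claim.

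\emph{Main obstacle.} Since $|V_\partial|=O(\delta^{-1})$, entrywise convergence of Poisson kernels does not immediately imply convergence of determinants. The key step is to rewrite
\[
\frac{\det P_{\chi'}}{\det P_\chi}=\det\bigl(I+(A_{\chi'}-A_\chi)P_\chi^{-1}\bigr)
\]
as a Fredholm determinant of a kernel supported near the cuts (built from the character difference and the twisted Green function) and to establish Hilbert--Schmidt convergence of this kernel uniformly in $\delta$. As the remark following Lemma \ref{Lem:harmmeas} emphasizes, \eqref{eq:weakconv} does not by itself imply $C^0$ convergence of $\Lap_\chi^{-1}$, so upgrading weak harmonic measure convergence to the boundary-trace regularity of the twisted Green function required here -- presumably via coupling and crossing arguments that control contributions from walks hitting the cut close to the diagonal -- is the main technical hurdle.
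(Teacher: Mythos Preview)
Your Schur-complement reduction is in the right spirit, but the obstacle you flag at the end is real and is precisely what the paper's argument is engineered to avoid; as written, your plan does not close the gap. Cutting along both generators produces a boundary operator whose inverse $P_\chi^{-1}$ is the twisted Green function restricted to $V_\partial\times V_\partial$; multiplying by $A_{\chi'}-A_\chi$ (supported on cut-crossing edges) effectively differentiates it across the cut, so the resulting kernel behaves like a Poisson kernel with a $|x-y|^{-1}$-type diagonal singularity on a one-dimensional boundary. Such kernels are not Hilbert--Schmidt, and under \eqref{eq:weakconv} you have neither $C^0$ convergence of the Green function nor any a priori microscopic regularity, so the ``upgrade'' you allude to is not a detail but the whole difficulty.

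The paper sidesteps this entirely by a different decomposition. Rather than cutting along $A\cup B$, it fixes two disjoint, macroscopically separated cycles $\gamma_1,\gamma_2$ both homotopic to $A$, and shows (Lemma~\ref{Lem:Fred}) that for characters with $\chi(A)=\chi'(A)$,
\[
\frac{\det\Lap_{\chi'}^\delta}{\det\Lap_\chi^\delta}=\frac{\det(\Id-S_{\chi'}^\delta)}{\det(\Id-S_\chi^\delta)},
\]
where $S_\chi^\delta f(x)=\E_\delta^x[f(X_{\sigma_{21}})]$ is the ``go to $\gamma_2$ then return to $\gamma_1$'' operator. Because $\gamma_1$ and $\gamma_2$ are at positive macroscopic distance, $S_\chi^\delta$ has a \emph{smooth} kernel built purely from harmonic measures between separated curves---no diagonal singularity and no Green function needed. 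Two further ingredients then finish the job: a uniform-in-$\delta$ contraction bound $\vertiii{S_\chi^\delta}_\infty\le 1-\eps$ (Lemma~\ref{Lem:contract}, obtained by a coupling that forces cancellation since $\chi(B)\neq 1$), which makes the trace expansion $-\log\det(\Id-S_\chi^\delta)=\sum_{k\ge 1}k^{-1}\Tr((S_\chi^\delta)^k)$ converge uniformly in $\delta$; and termwise convergence of $\Tr((S_\chi^\delta)^k)$ to its Brownian analogue (Lemma~\ref{Lem:trace}), which follows from Lemma~\ref{Lem:harmmeas} together with a Harnack-type ratio bound on discrete Poisson kernels. The restriction $\chi(A)=\chi'(A)$ is removed at the end by swapping the roles of $A$ and $B$ and applying the argument twice. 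The point is that the choice of \emph{two parallel cycles} rather than the full fundamental-polygon boundary is not cosmetic: it is what makes the relevant operator trace-class with a uniformly controlled expansion under the weak hypothesis \eqref{eq:weakconv}.
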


Given the universality result of Proposition \ref{Prop:detconv}, in order to identify the righthand side we can work on graphs of our choosing. We pick square lattices, for which we have not only convergence in the weak sense of \eqref{eq:weakconv} (up to time change), but also precise heat kernel asymptotics. This leads to

\begin{prop}\label{Prop:square}
Fix $\chi,\chi'$ non-trivial unitary characters of $\pi_1(\Sigma)$ and take $\Gamma_\delta=\delta\Z^2/(\Z+\tau_\delta\Z)$ where $\tau_\delta\in\delta\Z^2$ and $\tau_\delta=\tau+o(1)$, $\delta^{-1}\in\N$. Then 
$$\frac{\det(\Lap_{\chi'}^{\delta})}{\det(\Lap_\chi^{\delta})}\stackrel{\delta\searrow 0}{\longrightarrow}\frac{T(\chi')^2}{T(\chi)^2}$$
where
$$T(\chi)=|\eta(\tau)^{-1}e^{i\pi v^2\tau}\theta(u-v\tau|\tau)|$$
and $\chi(m\tau+n)=\exp\left(2i\pi(mu+nv)\right)$.
\end{prop}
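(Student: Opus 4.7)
The plan is to diagonalize $\Lap_\chi^\delta$ by Fourier analysis on the square-lattice torus, evaluate its determinant in closed form via standard trigonometric identities, and match the resulting asymptotics to $T(\chi)^2$. Writing $\tau_\delta=(p_\delta+iq_\delta)\delta$ with $p_\delta,q_\delta\in\Z$, $q_\delta>0$, and setting $N_1=\delta^{-1}$, the plane waves $e^{i(\xi x+\eta y)}$ diagonalize $\Lap_\chi^\delta$ with eigenvalue $4\sin^2(\xi\delta/2)+4\sin^2(\eta\delta/2)$, subject to the monodromy conditions $\xi=2\pi(v+n)$ and $\xi\Re\tau_\delta+\eta\Im\tau_\delta=2\pi(u+m)$, for $n\in\{0,\ldots,N_1-1\}$, $m\in\{0,\ldots,q_\delta-1\}$. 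Using the identity
$$\prod_{m=0}^{M-1}\bigl[2\cosh\mu-2\cos(\psi+2\pi m/M)\bigr]=2\cosh(M\mu)-2\cos(M\psi),$$
(which follows from factoring $X^M-e^{iM\psi}$ over its roots), the product over $m$ collapses, yielding
$$\det\Lap_\chi^\delta=\prod_{n=0}^{N_1-1}\bigl[2\cosh(q_\delta\mu_n)-2\cos(2\pi u-2\pi(v+n)p_\delta\delta)\bigr],\qquad \mu_n:=2\operatorname{arcsinh}|\sin(\pi(v+n)\delta)|.$$

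Applying $2\cosh a-2\cos b=e^a|1-e^{-a+ib}|^2$ splits this as $\det\Lap_\chi^\delta=B_\delta\cdot F_\delta(\chi)$, with bulk factor $B_\delta=\exp\bigl(q_\delta\sum_n\mu_n\bigr)$ and finite factor $F_\delta(\chi)=\prod_n|1-e^{-q_\delta\mu_n+i\phi_n}|^2$ where $\phi_n=2\pi u-2\pi(v+n)p_\delta\delta$. For $F_\delta$, note that $q_\delta\mu_n\to+\infty$ whenever $(v+n)\delta\bmod 1$ is bounded below, so those factors tend to $1$ and only indices with $n\approx 0$ or $n\approx N_1$ contribute nontrivially. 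For $n=k$ fixed one computes $q_\delta\mu_n\to 2\pi(v+k)\Im\tau$ and $\phi_n\to 2\pi u-2\pi(v+k)\Re\tau$, giving the limit factor $|1-e^{-2\pi iu}q^{v+k}|^2$ with $q=e^{2\pi i\tau}$; the symmetric high-index limit yields $|1-e^{2\pi iu}q^{1-v+k}|^2$. Comparing the assembled product to the Jacobi triple product
$$\theta_1(u-v\tau|\tau)=2q^{1/8}\sin\pi(u-v\tau)\prod_{n\geq 1}(1-q^n)(1-e^{2\pi iu}q^{n-v})(1-e^{-2\pi iu}q^{n+v}),$$
I find $F_\delta(\chi)\to e^{\pi\Im\tau/3-2\pi v(1-v)\Im\tau}\cdot T(\chi)^2$, times a $\chi$-independent constant.

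For the bulk factor, write $\mu_n=\mu^\ast((v+n)/N_1)$ for the $1$-periodic symbol $\mu^\ast(\theta)=2\operatorname{arcsinh}|\sin\pi\theta|$. Since $\mu^\ast$ is smooth on $\R\setminus\Z$ with $(\mu^\ast)'$ jumping by $4\pi$ at integers, two integrations by parts give $c_k(\mu^\ast)=-(\pi k^2)^{-1}+O(k^{-3})$. The discrete-shift identity $\sum_{n=0}^{N_1-1}\mu^\ast((v+n)/N_1)=N_1\sum_{j\in\Z}c_{jN_1}e^{2\pi ijv}$, together with $\sum_{j\geq 1}(\cos 2\pi jv-1)/j^2=-\pi^2 v(1-v)$ and $q_\delta/N_1\to\Im\tau$, yields
$$q_\delta\Bigl(\sum_n\mu_n-\sum_n\mu_n\bigl|_{v=0}\Bigr)\;\longrightarrow\;2\pi v(1-v)\Im\tau.$$
Thus $B_\delta$ carries a $\chi$-dependent anomaly $e^{2\pi v(1-v)\Im\tau}$ that exactly cancels the opposing anomaly in $F_\delta(\chi)$, leaving $\det\Lap_\chi^\delta=C_\delta\cdot T(\chi)^2(1+o(1))$ with $C_\delta$ $\chi$-independent, and the proposition follows upon taking the ratio of two characters.

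The principal challenge is the uniform control needed for the cancellation to be exact in the limit: the bulk computation requires summing the Fourier-series remainder against $q_\delta\sum_n$ with enough precision to single out the $v(1-v)$ coefficient, while the finite-product limit requires tail control of the intermediate-$n$ factors (where $q_\delta\mu_n$ is neither small nor very large) and careful tracking of the rational approximation $\tau_\delta\to\tau$ so that both $p_\delta\delta\to\Re\tau$ and $q_\delta\delta\to\Im\tau$ feed cleanly into the limiting phases and moduli.
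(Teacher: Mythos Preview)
Your argument is correct and follows a genuinely different route from the paper. The paper deliberately avoids a spectral computation: it writes $\Lap_\chi^\delta=\Id-P_\chi^\delta$, expands $-\log\det(\Lap_\chi^\delta)=\sum_{k\ge 1}\frac1k\Tr((P_\chi^\delta)^k)$, and controls the difference $\Tr((P_\chi^\delta)^k)-\Tr((P_{\chi'}^\delta)^k)$ in three time regimes (short, diffusive, long) using a large deviation bound, the Local Central Limit Theorem, and a first-difference estimate for the SRW heat kernel. The limit is recognized as a heat-kernel integral, and the explicit evaluation in terms of $\theta$-functions is outsourced to Ray--Singer's computation of analytic torsion (Kronecker's second limit formula). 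Your approach instead exploits the exact solvability of the square lattice: you compute the spectrum, telescope the $m$-product via $\prod_m(2\cosh\mu-2\cos(\psi+2\pi m/M))=2\cosh(M\mu)-2\cos(M\psi)$, and identify the limit directly with the Jacobi triple product. The delicate point you correctly isolate---that the $v(1-v)$ anomaly from the bulk Riemann sum exactly cancels the prefactor $|q|^{v(1-v)}$ relating $\lim F_\delta(\chi)$ to $T(\chi)^2$---is the spectral analogue of the paper's long-time cancellation (item (3) there), and your Fourier-coefficient computation of $\mu^\ast$ handles it cleanly. Your method is more self-contained (no LCLT, no reference to \cite{RS_analytic}) but is tied to the square lattice structure; the paper's heat-kernel method is in principle portable to any lattice with suitable LCLT and gradient estimates, and makes the connection to $\zeta$-regularized determinants transparent. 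The paper in fact notes that spectral arguments of your type already appear in \cite{BdT_loop,KSW_torus} for Kasteleyn matrices, and presents its heat-kernel argument explicitly ``for the sake of variety''.
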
 
Here $\eta$ denotes the Dedekind $\eta$ function \eqref{eq:eta} and $\theta$ the odd $\theta$ function \eqref{eq:theta}, with conventions as in \cite{Chandra}.

On the square lattice, we take all (nearest neighbor) conductances to be equal, so that the corresponding random walk is a simple random walk (see Section \ref{Sec:square}  for definitions and conventions for $\theta$ functions).

Finally, we need to identify the limiting distribution of $[{\mf m}]$ from the limit of Laplacian determinants; this is performed in 

\begin{thm}\label{Thm:discrg}
As the mesh $\delta$ goes to zero, the law induced by $[{\mf m}]$ on $H_1(\Sigma,\Z)\simeq\Z+\tau\Z$ converges to the discrete Gaussian distribution $\P_0$ specified by
$$\P_0\{r\tau+s\}\propto \exp\left(-\frac\pi {2\Im\tau} |r\tau+s|^2\right)$$
for $r,s\in\Z$.
\end{thm}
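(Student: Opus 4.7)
My plan is to compute the characteristic function $\chi\mapsto\E[\chi([{\mf m}])]$ for unitary characters $\chi$ of $\pi_1(\Sigma)$, express it in terms of twisted Laplacian determinants via the Temperley bijection and Forman's formula, take the limit via Propositions \ref{Prop:detconv} and \ref{Prop:square}, and recognize the result as $\hat\P_0(\chi)$. Since $H_1(\Sigma,\Z)\simeq\Z^2$ is a countable discrete group, pointwise convergence of characteristic functions on the compact dual torus implies weak convergence of the corresponding probability laws.

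The main combinatorial work is to derive the identity
\[
Z(\chi):=\sum_{\mf m}w({\mf m})\chi([{\mf m}])=\tfrac12\bigl[\det(\Lap_{\chi\sigma_1})+\det(\Lap_{\chi\sigma_2})+\det(\Lap_{\chi\sigma_3})-\det(\Lap_\chi)\bigr],
\]
where $\sigma_1,\sigma_2,\sigma_3$ are the three nontrivial ``half-period'' characters in $\Hom(H_1(\Sigma,\Z),\{\pm1\})$. Via Temperley's bijection, $Z(\chi)$ is a sum over oriented pairs $(F,F^*)$ where $w(F^*)=1$, the unoriented $F^*$ is determined by $F$, and the $2^k$ orientations of its root cycles are free. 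Since all root cycles lie in a common primitive class $\pm[\tilde\gamma]$ and $[{\mf m}]=\tfrac12([F]+[F^*])$, summing over orientations of $F^*$ yields $\sum_{F^*}\chi([{\mf m}])=\prod_i(1+\chi(\gamma_i))$, where $\gamma_1,\ldots,\gamma_k$ are the oriented cycles of $F$. Forman's formula (Theorem \ref{Thm:Forman}) contributes $\prod_i(1-(\chi\sigma)(\gamma_i))$, which equals $\prod_i(1+\chi(\gamma_i))$ exactly when $\sigma(\tilde\gamma)=-1$. The identity then follows from the parity observation that for any coprime $(p,q)$ parametrizing $\tilde\gamma=pA+qB$, exactly two of the three $\sigma_j$ satisfy $\sigma_j(\tilde\gamma)=-1$. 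Specializing at $\chi=1$ and using $\det(\Lap_1)=0$ gives $\mc Z=\tfrac12\sum_j\det(\Lap_{\sigma_j})$.

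Dividing numerator and denominator of $Z(\chi)/\mc Z$ by $\det(\Lap_{\sigma_1})$ and applying Propositions \ref{Prop:detconv} and \ref{Prop:square} termwise (with the convention $T(1)=0$ handling the degenerate cases $\chi=\sigma_j$), we obtain
\[
\lim_{\delta\searrow0}\E[\chi([{\mf m}])]=\frac{T(\chi\sigma_1)^2+T(\chi\sigma_2)^2+T(\chi\sigma_3)^2-T(\chi)^2}{T(\sigma_1)^2+T(\sigma_2)^2+T(\sigma_3)^2}.
\]
Writing $\chi(m\tau+n)=e^{2\pi i(mu+nv)}$, the four characters $\{\chi,\chi\sigma_j\}$ correspond to the four half-period translates of $(u,v)$; the quasi-periodicities of $\theta=\theta_1$ then convert the four $T(\chi\sigma_j)^2$ into the four squared moduli $|\theta_j(u-v\tau|\tau)|^2$, each times a common factor proportional to $|\eta(\tau)|^{-2}e^{-2\pi v^2\Im\tau}$. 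A classical theta identity (essentially Poisson summation on $\Z+\tau\Z$) expresses the lattice sum $\sum_{r,s}e^{2\pi i(ru+sv)}e^{-\pi|r\tau+s|^2/(2\Im\tau)}$ as precisely this combination $|\theta_2|^2+|\theta_3|^2+|\theta_4|^2-|\theta_1|^2$ up to the prefactor, with the analogous identity at $(u,v)=(0,0)$ for the denominator. Taking the ratio recovers $\hat\P_0(\chi)$ and completes the proof.

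\textbf{Main obstacle.} The crux is the combinatorial identity of the second paragraph, which requires a careful interplay of the Temperley bijection, Forman's formula, and the parity fact that exactly two of the three nontrivial half-period characters flip any given primitive class $[\tilde\gamma]\in H_1(\Sigma,\Z)$. The final theta identity is classical but requires precise sign bookkeeping so that the ``$-\det(\Lap_\chi)$'' correction on the determinant side cancels the spurious odd-theta $|\theta_1|^2$ contribution on the lattice-sum side.
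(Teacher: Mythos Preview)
Your proposal is correct and follows essentially the same route as the paper. The key combinatorial identity you derive,
\[
Z(\chi)=\tfrac12\bigl[\det(\Lap_{\chi\sigma_1})+\det(\Lap_{\chi\sigma_2})+\det(\Lap_{\chi\sigma_3})-\det(\Lap_\chi)\bigr],
\]
is exactly the paper's identity $\hat{\mc Z}(\chi)=-\det(\Lap_\chi)+\tfrac12\sum_{\epsilon^2=1}\det(\Lap_{\epsilon\chi})$ (the sum there runs over all four $\epsilon$, including the trivial one, so the two expressions agree). Your derivation via the parity fact ``exactly two of the three nontrivial half-period characters flip any primitive class'' is a slightly more direct packaging of the paper's argument, which instead expands both $\hat{\mc Z}(\chi)$ and $\det(\Lap_\chi)$ in powers of $\chi(\gamma)$ and observes that even powers agree while odd powers differ by a sign (its equation \eqref{eq:signs}); the two are equivalent. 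For the final step, the paper carries out the Poisson summation on $h(\chi)=|e^{i\pi v^2\tau}\theta(u-v\tau)|^2$ explicitly and tracks the sign $(-1)^{(r-1)(s-1)+1}$, whereas you phrase the same computation as the classical identification of the four half-period shifts of $T(\chi)^2$ with $|\theta_1|^2,\dots,|\theta_4|^2$; again these are the same calculation in different notation.
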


There is a natural interpretation of the limiting distribution in terms of the compactified free field (e.g. Section 2.1.3 in \cite{Dub_tors}), which is already known to be the limit of the height on Temperleyan isoradial graphs (see \cite{Dub_tors}). The compactified free field on $\Sigma$ (with compactification radius $1$ and coupling constant $g_0$) is formally a measure on fields $\phi:\Sigma\rightarrow\R/2\pi \Z$ given by
$$\exp\left(-\frac{g_0}{4\pi}\int_\Sigma|\nabla\phi|^2dA\right){\mc D}\phi$$
which has the following well-defined interpretation: $\phi$ is the sum of two independent components, the scalar component $\phi_s$ - a Gaussian free field on $\Sigma$ - and the instanton component $\phi_h$, a harmonic, additively multivalued function on $\Sigma$ with periods in $2\pi\Z$. 

If $(\int_Ad\phi_h,\int_Bd\phi_h)=2\pi(-r,s)$, then $\phi_h$ lifted to $\C$ is the affine function
$$z\mapsto 2\pi\left(-r\left(\Re(z)-\frac{\Re\tau}{\Im\tau}\Im(z)\right)+s\frac{\Im(z)}{\Im\tau}\right)=2\pi\frac{\Im\left(z(r\bar\tau+s)\right)}{\Im\tau}$$
with Dirichlet energy
$${\rm Area}(\Sigma)(2\pi)^2\left(\frac{|r\bar\tau+s|}{\Im\tau}\right)^2=4\pi^2\frac{|r\tau+s|^2}{\Im\tau}.$$
The distribution of $\phi_h$ is specified by 
$$\P\left(\int_Ad\phi_h=-2\pi r, \int_Bd\phi_h=2\pi s\right)\propto\exp\left(-g_0\pi\frac{|r\tau+s|^2}{\Im\tau}\right)
$$
for $r,s\in\Z$. Remark that we take this period distribution as part of the rigorous definition of a compactified free field (rather than a property of it). In terms of dimer height, $[{\mf m}]=r\tau+s$ corresponds to a height variation of $-r$ (resp. $s$) along the $A$ (resp. $B$ cycle). 

In the Temperleyan isoradial case \cite{Dub_tors}, $2\pi h$ converges to a compactified free field with $g_0=\frac 12$. We conjecture this is also the case for general Temperleyan graphs under \eqref{eq:weakconv}. Theorem \ref{Thm:discrg} verifies this conjecture for the instanton component. 

%The ground state (additively multivalued function with said periods and minimal Dirichlet energy) lifted to $\C$ is the affine function 
%The theorem is thus consistent with a conjectured convergence of the height field towards a universal compactified free field limit under \eqref{eq:weakconv}.

%
%
%
%
%
%
%
%
%
%
%
%
%
%
%
%
%
%
%
%
%
%
%
%
%
%
%
%
%
%
%
%
%
%
%
%
%
%
%
%
%
%
%
%
%
%
%
%
%
%
%

%
%
%
%
%
%
%
%
%
%

%

\section{Convergence of Laplacian determinants}

Our goal in this section is to prove Proposition \ref{Prop:detconv}, i.e. that ratios of Laplacian determinants have a universal limit under \eqref{eq:weakconv}. We start with some intuitive justification of that fact in terms of loop measures. Then we proceed to express these ratios in terms of series built from harmonic measures. Uniform convergence of these series is justified by a contraction argument, and term-wise convergence follows from Lemma \ref{Lem:harmmeas}, which proves Proposition \ref{Prop:detconv}. 

The decompositions considered here are rather similar to those employed in \cite{Dub_SLEGFF} in simply-connected domains; new features of this section include twists by characters and working under the weak convergence assumption \eqref{eq:weakconv}. In \cite{Dub_tors}, a result similar to Proposition \ref{Prop:detconv} is proved for Kasteleyn operators (rather than Laplacians) on Temperleyan isoradial graphs; the arguments there are based on character interpolation and are essentially disjoint from those presented in this section.

\subsection{Interpretation in terms of loop measure}

Let $\chi,\chi':\pi_1(\Sigma)\rightarrow\U$ be two nontrivial characters s.t. $\chi(A)=\chi'(A)$ and $\chi(B)\neq\chi'(B)$. Our goal is to show that 
\begin{equation}\label{eq:Lapratio}
\frac{\det(\Lap_{\chi'})}{\det(\Lap_{\chi})}
\end{equation}
is universal in the small mesh limit. 

We start with an informal argument, aimed at the reader familiar with loop measures \cite{LW,LJ_loops}. 

First we observe that the ratio \eqref{eq:Lapratio} is unchanged when replacing the conductances $c(xy)$ by the normalized conductances:
$$c'(xy)=\frac{c(xy)}{\sum_{y':y'\sim x}c(xy')}$$
so that we may assume $\sum_{y:y\sim x}c(xy)=1$ for all $x\in\Gamma$. Then $\Lap=\Id-P$ where $P$ is the Markov chain transition matrix for the discrete-time random walk on $\Gamma$. 

Consider the space of rooted loops on $\Gamma$, i.e. paths on $\Gamma$ of the type 
$$\ell=(v_0,v_1,\dots,v_{n-1},v_{n}=v_0)$$ 
for arbitrary length $|\ell|\stackrel{\rm def}=n$. This space is endowed with the (rooted) loop measure $\mu^{loop}$ specified by
$$\mu^{loop}((v_0,\dots,v_n=v_0))=\frac{1}{n}c(v_0v_1)\dots c(v_{n-1}v_0).$$
Remark that a loop $\ell$ induces a homology class $[\ell]\in H_1(\Sigma,\Z)$. One possible approach to universality of the ratio \eqref{eq:Lapratio} is based on the following formal identity
$$``\frac{\det(\Lap_{\chi'})}{\det(\Lap_{\chi})}=\exp\left(-\int (\chi'([\ell])-\chi([\ell]))d\mu^{loop}(\ell)\right)".$$
A difficulty is that the integral is not absolutely convergent (due to long loops); this can be remediated by introducing a small, positive killing rate and letting it go to zero (this gives a meaning to the RHS and it can then be checked that it equals the LHS). Working on bridge measures seems also inconvenient under \eqref{eq:weakconv}. 

Since weak convergence condition is essentially a condition on convergence of harmonic measure, we work instead with somewhat less probabilistic but more robust Poisson operators. The general idea is to enumerate loops as they travel back and forth between macroscopically distant cycles on the torus.

\subsection{Poisson operators}

For now we work on a fixed graph $\Gamma$ with mesh $\delta$. 
Let $\gamma_1$ and $\gamma_2$ be two disjoint simple cycles on $\Gamma$ homotopic to the $A$-cycle and denote $\gamma=\gamma_1\sqcup\gamma_2$ (for levity, we omit the dependence on $\delta$ when there is no ambiguity). In the small mesh limit, one may think of $\gamma_1$ and $\gamma_2$ as $[0,1]$ and $\frac{\tau}2+[0,1]$ up to $o(1)$. We define Poisson operators $R_\chi^\delta,Q_\chi^\delta$ by
\begin{equation}\label{eq:Rchidef}
\begin{split}
R_\chi^\delta:(\C^{\gamma_1})_\chi&\longrightarrow (\C^V)_\chi\\
f&\longmapsto \left(x\mapsto (R_\chi ^\delta f)(x)=\E^x_\delta(f(X_{\sigma_1}))\right)
\end{split}
\end{equation}
and symmetrically
\begin{eqnarray*}
Q_\chi^\delta:&(\C^{\gamma_2})_\chi&\longrightarrow (\C^V)_\chi\\
&f&\longmapsto \left(x\mapsto (Q_\chi^\delta f)(x)=\E^x_\delta(f(X_{\sigma_2}))\right)
\end{eqnarray*}
where $\sigma_i$ is the first hitting time of $\gamma_i$, $i=1,2$. Remark that, since $f$ is multivalued, $f(X_{\sigma_i})$ depends not only on the exit position $X_{\sigma_i}$ but also on the ``horizontal" winding number of the path up to $\sigma_i$.

Let us consider the operator $\phi_\chi:(\C^V)_\chi\rightarrow(\C^V)_\chi$ given by
$$f\mapsto R^\delta_\chi f_{|\gamma_1}+ Q^\delta_\chi f_{|\gamma_2}+f_{|V\setminus\gamma}.$$
It has a block decomposition
$$\phi_\chi=\left(   \begin{matrix} %
    \Id&(R^\delta_\chi)_{|\gamma_2}&0\\
    (Q^\delta_\chi)_{|\gamma_1}&\Id&0\\
    \ast&\ast&\Id
   \end{matrix}\right)$$
corresponding to the partition $V=\gamma_1\sqcup\gamma_2\sqcup(V\setminus\gamma)$.

Let us denote by $N_1$ the ``Neumann-jump operator" on $(\C^{\gamma_1})_\chi$ given by
$$N_1 f=(\Lap_\chi^\delta (R^\delta_\chi f))_{|\gamma_1}$$
and $N_2$ operating on $(\C^{\gamma_2})_\chi$ is defined similarly. Then
$$\Lap_\chi^\delta\phi_\chi=\left(   \begin{matrix} %
   N_1&0&\ast\\
   0&N_2&\ast\\
   0&0&(\Lap^\delta_\chi)_{|V\setminus\gamma}
   \end{matrix}\right).$$
The bottom right last block corresponds to a Laplacian with Dirichlet conditions on $V\setminus\gamma$ and depends on $\chi$ only through $\chi(A)$ and not $\chi(B)$.

Finally, if $\psi_\gamma$ is the operator on $(\C^V)_\chi$ given by
$$\psi_\gamma f=(R_\chi^\delta f_{|\gamma_1})+f_{|V\setminus\gamma_1}$$
we have
\begin{align*}
\psi_\chi&=\left(   \begin{matrix} %
  \Id&0\\
   \ast&\Id
   \end{matrix}\right)\\
\Lap_\chi^\delta\psi_\chi&=\left(   \begin{matrix} %
   N_1&\ast\\
   0&(\Lap_\chi^\delta)_{|V\setminus\gamma_1}
   \end{matrix}\right)
   \end{align*}
where the bottom right block does not depend on $\chi(B)$, in the block decomposition corresponding to the partition $V=\gamma_1\sqcup(V\setminus\gamma_1)$. We conclude that
$$\frac{\det(\Lap_\chi^\delta)}{\det(N_i)}$$
does not depend on $\chi(B)$ (for $i=1,2$), and neither does
$$\frac{\det(\Lap_\chi^\delta\phi_\chi)}{\det(N_1)\det(N_2)}.$$
Together with the classical identity
$$\det(\Id-BA)=\det\left( \begin{matrix} %
  \Id&A\\
   B&\Id
   \end{matrix}\right)=\det(\Id-AB)$$
this yields the

\begin{lem}\label{Lem:Fred}
If $S_\chi^\delta$ is the operator on $(\C^{\gamma_1})_\chi$ given by
$$S_\chi^\delta f=(Q_\chi^\delta((R_\chi ^\delta f)_{|\gamma_2}))_{|\gamma_1}$$
then
$$\frac{\det(\Lap_{\chi'}^\delta)}{\det(\Lap_{\chi}^\delta)}=\frac{\det(\Id-S_{\chi'}^\delta)}{\det(\Id-S_\chi^\delta)}$$
whenever $\chi$ is non-trivial and $\chi(A)=\chi'(A)$.
\end{lem}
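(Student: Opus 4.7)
The strategy is to combine the two block decompositions already on display — one using $\phi_\chi$ (which brings in Poisson operators to both $\gamma_1$ and $\gamma_2$) and one using $\psi_\chi$ (involving only $\gamma_1$) — so as to express $\det(\Lap^\delta_\chi)$ as $\det(\Id-S^\delta_\chi)$ times a ratio of Dirichlet Laplacian determinants on subsurfaces whose fundamental groups contain no $B$-cycle. That ratio then cancels in the $\chi'/\chi$ quotient.

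First, I apply the Sylvester / Schur-complement identity $\det(\Id-BA)=\det(\Id-AB)$ to the upper $2\times 2$ block of $\phi_\chi$, with $A=(Q^\delta_\chi)_{|\gamma_1}$ and $B=(R^\delta_\chi)_{|\gamma_2}$, to get $\det(\phi_\chi)=\det(\Id-S^\delta_\chi)$; the identity $\det(\psi_\chi)=1$ is immediate from its lower-triangular form. Using the upper-triangular block forms of $\Lap^\delta_\chi\phi_\chi$ and $\Lap^\delta_\chi\psi_\chi$, which rely on the identity $R^\delta_\chi g\equiv g$ on $\gamma_1$ and on the harmonicity of $R^\delta_\chi g$, $Q^\delta_\chi g$ off $\gamma_1$, $\gamma_2$ respectively, one reads off
\begin{align*}
\det(\Lap^\delta_\chi)\,\det(\Id-S^\delta_\chi)&=\det(N_1)\det(N_2)\det\bigl((\Lap^\delta_\chi)_{|V\setminus\gamma}\bigr),\\
\det(\Lap^\delta_\chi)&=\det(N_i)\det\bigl((\Lap^\delta_\chi)_{|V\setminus\gamma_i}\bigr),\quad i=1,2.
\end{align*}
Eliminating $\det(N_1)$ and $\det(N_2)$ between these three relations yields
\begin{equation*}
\det(\Lap^\delta_\chi)=\det(\Id-S^\delta_\chi)\cdot\frac{\det\bigl((\Lap^\delta_\chi)_{|V\setminus\gamma_1}\bigr)\,\det\bigl((\Lap^\delta_\chi)_{|V\setminus\gamma_2}\bigr)}{\det\bigl((\Lap^\delta_\chi)_{|V\setminus\gamma}\bigr)}.
\end{equation*}

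The concluding topological observation is that, since $\gamma_1$ and $\gamma_2$ are simple cycles homotopic to $A$, every $B$-cycle meets each of them (intersection number $\pm 1$); consequently every connected component of $V\setminus\gamma_i$ and of $V\setminus\gamma$ is a discrete cylinder whose fundamental group is generated by $[A]$. In lift-to-universal-cover terms, each such component pulls back to a disjoint union of horizontal strips on which a Dirichlet-vanishing $\chi$-equivariant function is determined by its values on one strip modulo the $\chi(A)$-twist, and no $\chi(B)$-twist ever acts. Hence the three Dirichlet Laplacians on the right of the displayed identity depend only on $\chi(A)$. Taking the ratio of that identity for $\chi'$ against $\chi$ under the hypothesis $\chi(A)=\chi'(A)$ cancels the cylinder factor and delivers Lemma \ref{Lem:Fred}.

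The main obstacle is really just careful bookkeeping: checking that the three cited determinant factorizations do follow from the block-matrix computations, and that the cut-open Dirichlet Laplacians genuinely do not feel $\chi(B)$. Neither point is deep — the first is Schur complement in a concrete form, and the second is a handle-cutting argument — but their combination requires one to notice that, although $\det(N_i)$ and $\det(\Lap^\delta_\chi)$ individually both depend on $\chi(B)$, that dependence is carried in a \emph{proportional} way by the factorization $\det(\Lap^\delta_\chi)=\det(N_i)\det((\Lap^\delta_\chi)_{|V\setminus\gamma_i})$, so that elimination between the two factorizations isolates a pure $\chi(A)$-factor.
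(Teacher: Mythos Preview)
Your argument is correct and follows essentially the same route as the paper: both use the block decompositions for $\phi_\chi$ and $\psi_\chi$ together with the Sylvester identity to reduce to the fact that the Dirichlet Laplacians on $V\setminus\gamma_1$, $V\setminus\gamma_2$, $V\setminus\gamma$ depend only on $\chi(A)$. Your version is slightly more explicit in writing out the intermediate identity $\det(\Lap^\delta_\chi)=\det(\Id-S^\delta_\chi)\cdot\det D_1\det D_2/\det D_{12}$ and in spelling out the handle-cutting reason for $\chi(B)$-independence, whereas the paper simply records that the relevant ratios are independent of $\chi(B)$ and combines them; but the content is the same.
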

Concretely, if $\sigma_{21}$ is the time of the first visit to $\gamma_1$ after the first visit to $\gamma_2$,
\begin{equation}\label{eq:Schidef}
(S_\chi^\delta f)(x)=\E^x_\delta(f(X_{\sigma_{21}}))
\end{equation}
where $f$ is understood as a $\chi$-multivalued function on the universal cover.

Let us sketch an alternative proof of Lemma \ref{Lem:Fred}, closer to Proposition 2.2 in \cite{Dub_SLEGFF}. Let $c_\chi(xy)=c(xy)$ for a generic edge $(xy)$ of $\Gamma$, and $c_\chi(xy)=\chi(B)c(xy)$ if $(xy)$ traverses an $A$-cycle bounding a fundamental domain with direct orientation, and likewise for other edges crossing the boundary of a fundamental domain, so that $\Lap_\chi$ may be identified with $(c_\chi(xy))_{x,y\in V}$. Then we have the expansion
$$-\log\det(\Lap^\delta_\chi)=\lim_{\eps\searrow 0}\sum_{k\geq 1}\frac{(1-\eps)^k}{k}\sum_{v_0,\dots,v_{k-1}\in V}c_\chi(v_0v_1)\dots c_\chi(v_{k-1}v_0).$$
Then one can decompose each summand (corresponding to a rooted loop on $\Gamma$) between successive visits to $\gamma_1$ and $\gamma_2$; some rather tedious bookkeeping leads to Lemma \ref{Lem:Fred}.

\subsection{Contractions}

Given Lemma \ref{Lem:Fred}, we are now concerned with the convergence of
$$\det(\Id-S_\chi^\delta)$$
along a suitable sequence of graphs $(\Gamma_\delta)_\delta$ on $\Sigma$ (the mesh $\delta$ going to zero along some sequence), where $S_\chi^\delta$ is given by \eqref{eq:Schidef}. We assume that $\gamma_1=\gamma_1^\delta$ (resp. $\gamma_2$) is a simple cycle on $\Gamma_\delta$ within $o(1)$ of $[0,1]$ (resp. $\frac\tau 2+ [0,1]$), in the sense of uniform convergence up to reparametrization.

From \eqref{eq:Schidef}, it is obvious that $\vertiii{S_\chi^\delta}_\infty\leq 1$. Here $\vertiii{.}_\infty$ denotes the $L^\infty$ operator norm. From the maximum principle, one can also argue that $\vertiii{S_\chi^\delta}_\infty<1$ for fixed mesh $\delta$. In order to control expansions of $\det(\Id-S_\chi^\delta)$ in the small mesh limit, we need an operator norm estimate uniform 
in $\delta$.

\begin{lem}\label{Lem:contract}
Fix $\chi:\pi_1(\Sigma)\rightarrow\U$ non-trivial. There is $\eps=\eps(\chi)>0$ such that for $\delta$ small enough, $S_\chi^\delta:(\C^{\gamma_1^\delta})_\chi\rightarrow (\C^{\gamma_1^\delta})_\chi$ is a $(1-\eps)$-contraction:
$$\vertiii{S_\chi^\delta}_\infty\leq 1-\eps.$$
\end{lem}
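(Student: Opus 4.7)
The plan is to realise $S_\chi^\delta$ as an integral operator on (a fundamental domain of) $\gamma_1^\delta$ and to reduce the contraction statement to a pointwise bound on its kernel. Writing $\tilde X_{\sigma_{21}}=Y+m+n\tau$, with $Y$ the projection of $\tilde X_{\sigma_{21}}$ to the chosen fundamental domain of $\gamma_1$ and $(m,n)\in\Z^2$ the corresponding winding, the $\chi$-equivariance of $f$ gives $(S_\chi^\delta f)(x)=\sum_y K(x,y)f(y)$ with
\[K(x,y)=\sum_{m,n}\chi(A)^m\chi(B)^n\,q_{m,n}^\delta(x,y),\qquad q_{m,n}^\delta(x,y):=\P_\delta^{\tilde x}\bigl(\tilde X_{\sigma_{21}}=y+m+n\tau\bigr).\]
Since $\sum_{y,m,n}q_{m,n}^\delta(x,y)=1$, one has $\vertiii{S_\chi^\delta}_\infty=\sup_x\sum_y|K(x,y)|$; setting $p^\delta(x,y)=\sum_{m,n}q_{m,n}^\delta(x,y)$, the claim reduces to the uniform pointwise bound $|K(x,y)|\leq(1-\eps)\,p^\delta(x,y)$, equivalently $|\psi_\delta(x,y)|\leq 1-\eps$ for the conditional phase $\psi_\delta(x,y)=\E_\delta^{\tilde x}[\chi(A)^M\chi(B)^N\mid Y=y]$.

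The topology of a $\gamma_1\to\gamma_2\to\gamma_1$ excursion constrains $n\in\{-1,0,1\}$: in each leg the walk exits the strip bounded by two consecutive lifts of $\gamma_2$ (resp.\ $\gamma_1$) either ``upward'' ($U_i$) or ``downward'' ($D_i$), and the four joint types $U_1U_2,U_1D_2,D_1U_2,D_1D_2$ produce $B$-windings $+1,0,0,-1$ respectively. Applying Lemma \ref{Lem:harmmeas} on bounded simply-connected subdomains exhausting these strips shows that $\P_\delta^x(U_i),\P_\delta^x(D_i)$ remain bounded away from $0$ and $1$ uniformly in the starting point and in small $\delta$; the strong Markov property then yields $\P_\delta^x(U_1U_2),\ldots,\P_\delta^x(D_1D_2)\geq c_0>0$ uniformly.

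To upgrade these unconditional bounds to the conditional control required for $|\psi_\delta(x,y)|\leq 1-\eps$, I would establish the comparability $q_\star^\delta(x,y)\geq c\,p^\delta(x,y)$ for each path type $\star\in\{U_1U_2,U_1D_2,D_1U_2,D_1D_2\}$, where $q_\star^\delta(x,y)$ denotes the sub-kernel for type $\star$ summed over the $A$-winding $m$. Using the Markovian convolution $q_{U_1U_2}^\delta(x,y)=\sum_{z\in\gamma_2^\delta}h_{U_1}^\delta(x,z)\,h_{U_2}^\delta(z,y)$ (for the one-sided hitting densities on the two strips), together with a fixed finite partition of $\gamma_1,\gamma_2$ into macroscopic arcs, iterated applications of Lemma \ref{Lem:harmmeas} yield that each $h_{U_i}^\delta,h_{D_i}^\delta$ is comparable arc-by-arc to the total hitting density on the target cycle, whence the required comparability of the four $q_\star^\delta$ with $p^\delta$ follows by convolution.

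The contraction then follows from a phase-cancellation argument: if for each $(x,y)$ two outcomes $(m_i,n_i)$ have conditional probability $\geq c$ and satisfy $\chi(A)^{m_1-m_2}\chi(B)^{n_1-n_2}\neq 1$, a quantitative triangle inequality for sums of unit-modulus complex numbers yields $|\psi_\delta(x,y)|\leq 1-\eps$. When $\chi(B)\neq 1$, the previous paragraph directly supplies two distinct $n$-values with uniformly positive conditional mass, and we are done. The main obstacle is the residual case $\chi(A)\neq 1,\chi(B)=1$, where the cancellation must come from the $A$-winding $M$ alone, whose law is a priori supported on all of $\Z$. Here the plan is to show that, conditionally on any fixed endpoint $y$ and leg type, the law of $M$ has uniformly Gaussian-type tails (via Brownian-in-cylinder estimates transferred to $\delta>0$ through Lemma \ref{Lem:harmmeas} and classical exit-time bounds for random walks) and is sufficiently diffuse that no single class modulo the order of $\chi(A)$ carries all the conditional mass, supplying the second outcome with a distinct $\chi$-phase and completing the proof.
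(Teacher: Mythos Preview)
Your route differs from the paper's and is more involved. After reducing (correctly) to $\sum_y|K(x,y)|\leq 1-\eps$, you further reduce to the pointwise kernel bound $|K(x,y)|\leq(1-\eps)\,p^\delta(x,y)$ at every endpoint $y$; this second reduction is unnecessary and is what forces you into the conditional-comparability machinery. The paper bounds $|(S_\chi^\delta f)(x)|$ directly by a path coupling (for $\chi(B)\neq 1$): run an auxiliary walk $Y$ from $\tau/2$ in a rectangle $R$ and, on the event that $Y$ exits $R$ on top, couple the walk from $x$ so that on an event $E$ it merges with $Y$ and on a companion event $E'$ (related to $E$ by a measure-preserving correspondence) it merges with the translate $Y-\tau$. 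The two coupled walks then share their terminal segment up to translation by $\tau$, so on the universal cover $X_{\sigma_{21}}(\omega)=X_{\sigma_{21}}(\omega')+\tau$ and hence $f(X_{\sigma_{21}}(\omega))=\chi(B)f(X_{\sigma_{21}}(\omega'))$. Pairing these contributions gives
\[
|(S_\chi^\delta f)(x)|\leq \P_x(E)\,|1+\chi(B)|+\bigl(1-2\P_x(E)\bigr)\leq 1-\eps
\]
in one stroke, with no conditioning on $y$ and no analysis of the $A$-winding.

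Within your approach there is also a gap at the step ``arc-by-arc comparability of $h_{U_i}^\delta,h_{D_i}^\delta$ yields, by convolution, the pointwise comparability $q_\star^\delta(x,y)\geq c\,p^\delta(x,y)$''. Lemma~\ref{Lem:harmmeas} only controls harmonic measure of macroscopic arcs; passing to pointwise-in-$y$ comparability of the convolutions requires knowing that $z\mapsto h_{U_2}^\delta(z,y)$ varies by at most a bounded factor over each arc of your partition of $\gamma_2$, so that $\sum_{z\in J}h_{U_1}^\delta(x,z)h_{U_2}^\delta(z,y)$ can be compared with $\sum_{z\in J}h_{D_1}^\delta(x,z)h_{U_2}^\delta(z,y)$. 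That is a Harnack estimate, available under \eqref{eq:weakconv} (cf.\ \eqref{eq:harnackest} and the use of Lemma~1.2 of \cite{yadin_lerw} later in the paper) but not a consequence of Lemma~\ref{Lem:harmmeas} alone. Your treatment of the case $\chi(B)=1$, $\chi(A)\neq 1$ remains a sketch; the paper also defers it as ``similar'', the coupling being adapted to produce endpoints differing by $1$ rather than $\tau$.
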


\begin{proof}
For simplicity of exposition we treat the case $\chi(B)\neq 1$, $\tau$ pure imaginary, the general case being similar. Let $f\in L^\infty(\gamma_1)$ with $\|f\|_\infty\leq 1$. For $x\in\gamma_1$, we have
$$(S_\chi^\delta f)(x)=\E^x_\delta(f(X_{\sigma_{21}}))$$
and want to show $|S_\chi^\delta f(x)|\leq 1-\eps$. Take a rectangle $R=[-\frac 15,\frac 15]\times [\frac{\Im\tau} 4,\Im\tau]$. By the harmonic measure estimate of Lemma \ref{Lem:harmmeas}, the probability that the RW started from $\tau/2$ exits $R$ on the top side is bounded away from zero for small $\delta$. 

For any $x\in\gamma_1$ (say with $\Re(x)\in[\frac 25,\frac 35]$), one can find (again by Lemma \ref{Lem:harmmeas}) a polygonal ($L$-shaped) domain $U_x$ with a boundary arc $J_x$ s.t. : 
\begin{enumerate}
\item $B(x,\eta)\subset U_x\subset B(x,\eta^{-1})$ for some positive $\eta$ independent of $x$; 
\item $\Harm_\delta(x,J_x,U_x)$ is bounded away from zero (uniformly in $x$ and in $\delta$ small enough); 
\item any path started from $x$ exiting $U_x$ on $J_x$ intersects any path started from $\tau/2$ exiting $R$ on top. 
\end{enumerate}

Similarly, one can find a polygonal domain $U'_x$ with a boundary arc $J'_x$ s.t. any path started from $x$ exiting $U'_x$ on $J'_x$ intersects any path started from $-\tau/2$ exiting $R-\tau$ on top (also satisfying (1)-(2)), and crosses $-\frac\tau 2+[0,1]$. 

One can sample a RW started from $x$ as follows. First sample a RW $Y$ started from $\tau$ and stopped when exiting $R$. Then run a RW $X$ started from $x$ up to first intersection of $Y$ or $Y-\tau$; then follow $Y$ or $Y-\tau$. This shows that there are two events $E,E'$ with probability bounded away from zero and a measure preserving correspondence $\omega\in E\mapsto \omega'\in E'$ s.t. $X_{\sigma_{21}}(\omega)=X_{\sigma_{21}}(\omega')+\tau$ (on the universal cover). See Figure \ref{Fig:contract}.
\begin{figure}[htb]
\begin{center}
\leavevmode
\includegraphics[width=1.0\textwidth]{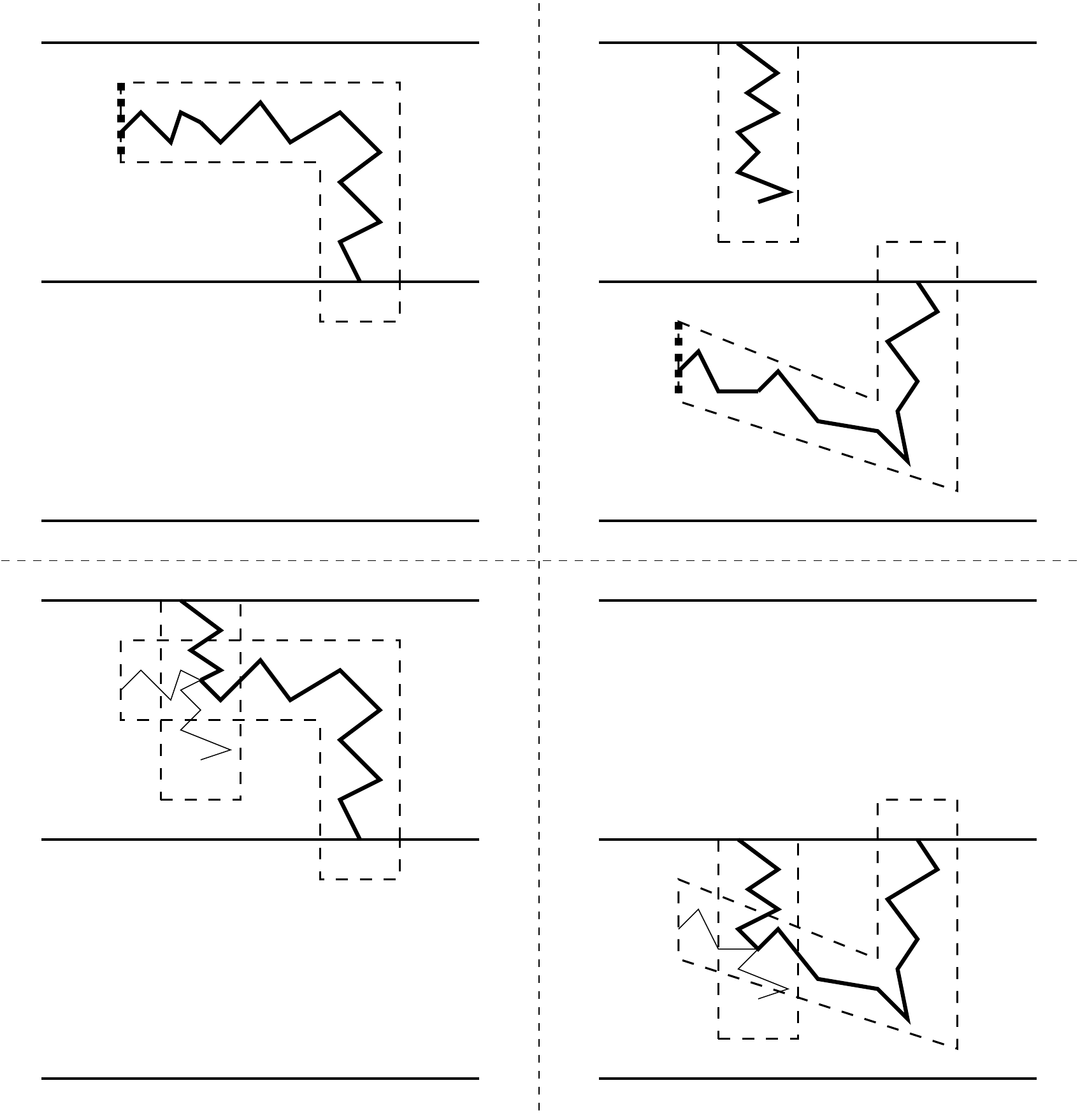}
\end{center}
\caption{Top left: a RW (thick) in $U_x$ (dashed) exiting on $J_x$ (thick, dashed). Top right: a RW $Y$ in $R$ exiting on top; and a RW in $U'_x$ (dashed) exiting on $J'_x$ (thick, dashed). Bottom: coupled random walks (thick) with the same terminal segment up to translation by $\tau$.}
\label{Fig:contract}
\end{figure}

Thus
$$|(S_\chi^\delta f)(x)|\leq \P_x(E)|\chi(B)+1|+1-2\P_x(E)\leq 1-\eps$$
as claimed (since $|\chi(B)+1|<2$).

\end{proof}

Alternatively, one could reason by contradiction, by extracting a subsequence $f_\delta\in L^\infty(\gamma_1^\delta)$ (the mesh $\delta$ goes to zero along some subsequence) with $\|f_\delta\|_\infty\leq 1$, $\|S_\chi f_\delta\|_\infty\nearrow 1$, and then a uniformly convergent subsequence; for this we need a Harnack estimate in lieu of Lemma \ref{Lem:harmmeas}.

\subsection{Iterated traces}

We now turn to the convergence of iterated traces $\Tr((S_\chi^{\delta})^k)$ as $\delta\searrow 0$, for $k\geq 1$ fixed. At some general level, $S_\chi^\delta$ is built from discrete harmonic measure, which converges (Lemma \ref{Lem:harmmeas}) to (continuous) harmonic measure. We proceed to check that this is enough to ensure convergence of the iterated traces to a universal limit. 

In the continuum, we can define a natural limiting operator (compare with \eqref{eq:Schidef}) by
\begin{equation}\label{eq:Schicont}
(S_\chi f)(x)=\E^x(f(B_{\sigma_{21}}))
\end{equation}
which can be decomposed as $S_\chi=Q_\chi R_\chi$ with (compare with \eqref{eq:Rchidef})
\begin{align*}
R_\chi:L^\infty_\chi(\gamma_1)\rightarrow L^\infty_\chi(\gamma_2):&&(R_\chi f)(x)=\E^x(f(B_{\sigma_1})),\\
Q_\chi:L^\infty_\chi(\gamma_2)\rightarrow L^\infty_\chi(\gamma_1):&&(Q_\chi f)(x)=\E^x(f(B_{\sigma_2})),
\end{align*}
where we take $\gamma_1=[0,1]$, $\gamma_2=\frac\tau 2+[0,1]$ for concreteness. Here $L^\infty_\chi(\gamma_i)$ designates measurable bounded $\chi$-multivalued functions on $\gamma_i$ with natural $L^\infty$ norm ($\chi$ is unitary).

We may also write it as an integral kernel operator
$$(R_\chi f)(x)=\int_{\gamma_1}f(y)R_\chi(x,y)dy$$
and likewise for $Q_\chi$. Plainly, $R_\chi$ (as a kernel) is bicontinuous and $S_\chi$ is trace-class.

From Lemma \ref{Lem:harmmeas}, it is easy to see that $R^{\delta}_\chi$ converges to $R_\chi$ in the following weak sense: if $f$ continuous on $\gamma_1$, and $f_\delta$ the restriction to $\gamma_1^\delta$ of a continuous extension of $f$, then $R_\chi^{\delta}f_\delta$
converges to $R_\chi f$ pointwise on $\gamma_2$. In order to get convergence of the trace we need additional continuity estimates (of the discrete harmonic measure w.r.t. the starting point).

\begin{lem}\label{Lem:trace}
If $\gamma_i^\delta$ converges to $\gamma_i$, $i=1,2$, and $k\geq 1$, then
$$\Tr((S^{\delta}_\chi)^k)\stackrel{\delta\searrow 0}{\longrightarrow}\Tr((S_\chi)^k)$$
\end{lem}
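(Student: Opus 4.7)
The plan is to interpret both traces as iterated integrals of kernels and pass to the limit using Lemma \ref{Lem:harmmeas}. Since $\gamma_1,\gamma_2$ are macroscopically separated, $S_\chi = Q_\chi R_\chi$ is a composition of two smoothing Poisson operators on the $\chi$-lifted strip; its kernel is real-analytic and bounded on $\gamma_1\times\gamma_1$, so $S_\chi^k$ is trace class with
$$\Tr(S_\chi^k) = \int_{\gamma_1^k} S_\chi(x_1,x_2)\cdots S_\chi(x_k,x_1)\,dx_1\cdots dx_k.$$
The discrete trace $\Tr((S_\chi^\delta)^k) = \sum_{x_1,\ldots,x_k\in\gamma_1^\delta}\prod_i S_\chi^\delta(x_i,x_{i+1})$ (indices cyclic) will be identified with a Riemann-sum approximation to this integral.

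First I would promote Lemma \ref{Lem:harmmeas} to uniform convergence $S_\chi^\delta f_\delta \to S_\chi f$ on $\gamma_1$ for every continuous $\chi$-multivalued $f$, where $f_\delta = f|_{\gamma_1^\delta}$. Pointwise convergence follows by writing $(S_\chi^\delta f_\delta)(x) = \E_\delta^x[\chi(\text{winding})^{-1}f(X_{\sigma_{21}})]$, decomposing over the finitely many winding classes relevant to a single $\gamma_1\to\gamma_2\to\gamma_1$ excursion, and applying Lemma \ref{Lem:harmmeas} to each bounded summand. Equicontinuity in the starting point, uniform in $\delta$, comes from the small-common-loop coupling used in the proof of Lemma \ref{Lem:contract}: two starting points on $\gamma_1$ can be coupled after completing a common macroscopic loop of uniformly positive probability. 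Together these yield uniform convergence.

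Second I would partition $\gamma_1 = \bigsqcup_{\alpha=1}^M I_\alpha$ into arcs of diameter at most $\eta$ and decompose both traces blockwise, $\Tr((S_\chi^\delta)^k) = \sum_{\vec\alpha}\mathcal{S}^\delta_{\vec\alpha}$ and $\Tr(S_\chi^k) = \sum_{\vec\alpha}\mathcal{I}_{\vec\alpha}$, with
$$\mathcal{S}^\delta_{\vec\alpha} = \Tr\bigl(S_\chi^\delta M_{\alpha_1}^\delta \cdots S_\chi^\delta M_{\alpha_k}^\delta\bigr),\qquad M_\alpha^\delta = \mathrm{diag}(\mathbf{1}_{I_\alpha\cap\gamma_1^\delta}),$$
and analogously for $\mathcal{I}_{\vec\alpha}$. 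For fixed $\vec\alpha$, iterating Step 1 shows that the $(k{-}1)$-fold composition $T^\delta_{\vec\alpha} = S_\chi^\delta M_{\alpha_2}^\delta\cdots S_\chi^\delta M_{\alpha_k}^\delta$ has a kernel converging uniformly on $\gamma_1\times\gamma_1$ to the continuum counterpart $T_{\vec\alpha}$; the remaining summation $\sum_{x_1\in I_{\alpha_1}\cap\gamma_1^\delta}(T^\delta_{\vec\alpha}S_\chi^\delta)(x_1,x_1)$ is then a Riemann sum approximating $\int_{I_{\alpha_1}}(T_{\vec\alpha}S_\chi)(x,x)\,dx = \mathcal{I}_{\vec\alpha}$. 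Summing over $\vec\alpha$ and invoking Lemma \ref{Lem:contract} to control tails uniformly in $\delta$, I would then send $\delta\to 0$ followed by $\eta\to 0$.

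The main obstacle is the scale-matching in this Riemann-sum identification: each matrix entry $S_\chi^\delta(x,y)$ is a hitting probability of order $\delta$ rather than a continuum-scale density, so one must verify that the $k$ factors collectively carry the $\delta^k$ needed to convert the nested summations into a genuine $k$-dimensional Riemann sum. This requires equicontinuity of the discrete Poisson kernel itself (as a function of both arguments), not merely of discrete-harmonic functions of one variable; the necessary regularity can again be extracted from Lemma \ref{Lem:harmmeas} by the same small-loop coupling idea. The absence of $C^1$ Green's-function convergence (cf.\ the remark following Lemma \ref{Lem:harmmeas}) is precisely why this coupling-based route, rather than a direct kernel comparison, is needed.
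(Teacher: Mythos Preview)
Your overall plan---express both traces in terms of harmonic measures, partition $\gamma_1,\gamma_2$ into macroscopic arcs, use regularity in the starting point to reduce to a finite sum, and then invoke Lemma~\ref{Lem:harmmeas}---is essentially the paper's. The gap is in how you close the final step.

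You propose to resolve the scale-matching obstacle by establishing ``equicontinuity of the discrete Poisson kernel itself (as a function of both arguments)'' via the small-loop coupling. This does not hold: as the paper explicitly remarks, $y\mapsto\log\Harm_\delta(x,\{y\},R)$ is ``typically highly oscillatory''; under \eqref{eq:weakconv} there is no regularity of the hitting kernel in the \emph{target} variable, and in particular no pointwise convergence $\delta^{-1}S_\chi^\delta(x,y)\to S_\chi(x,y)$. Your Step~2 then breaks down: iterating Step~1 yields convergence of $T^\delta_{\vec\alpha}$ on continuous test functions, but the diagonal entry $(T^\delta_{\vec\alpha}S_\chi^\delta)(x_1,x_1)$ requires applying $T^\delta_{\vec\alpha}$ to the singular, $\delta$-scale input $z\mapsto S_\chi^\delta(z,x_1)$, and the outer sum $\sum_{x_1\in I_{\alpha_1}}(\cdot)$ cannot be identified with a Riemann sum without kernel convergence on the diagonal. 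The coupling from Lemma~\ref{Lem:contract} is macroscopic and gives only an \emph{additive} error on hitting probabilities, which is useless against quantities of order $\delta$.

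The paper's fix is different and uses only a \emph{one-sided multiplicative} Harnack estimate, equation~\eqref{eq:harnackest}, derived from the Poisson-kernel convergence of Lemma~1.2 in \cite{yadin_lerw}: for $|x-x'|$ small, $\Harm_\delta(x',\{y\},R)/\Harm_\delta(x,\{y\},R)=1+O(\eps)$ uniformly in $y$. With this, one freezes the starting point at a reference $x_i\in I_i$ and absorbs the entire microscopic sum over $y\in J_j$ into the macroscopic quantity $\Harm_\delta(x_i,J_j,R)$; symmetrically, freezing $y$ at $y_j$ absorbs the sum over $x\in I_i$ into $\Harm_\delta(y_j,I_i,R+it)$. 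Both surviving factors are harmonic measures of arcs, to which Lemma~\ref{Lem:harmmeas} applies directly. No regularity in the target variable, and no genuine Riemann-sum identification, is ever needed.
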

\begin{proof}
We treat the case $k=1$, the general case being similar. The discrete trace may be written as
$$\Tr(S^{\delta}_\chi)=\sum_{x\in\gamma_1^\delta}\sum_{y\in\gamma_2^\delta}Q^{\delta}_\chi(x,y)R^{\delta}_\chi(y,x)$$
and the continuous trace (in the sense of trace-class operators, see e.g. \cite{Simtrace}) is
$$\Tr(S_\chi)=\int_{\gamma_1}\int_{\gamma_2}Q_\chi(x,y)R_\chi(y,x)dxdy$$
Here we look at $\gamma_i$ as closed cycles on $\Sigma$, $i=1,2$. Let $U(a,b)\subset\C$ denote the horizontal strip
$$U(a,b)=\{z\in\C: a<\Im z<b\}$$
and $t=\frac{\Im\tau}2$.

If we lift the cycles $\gamma_i$ to the universal cover, we get the expression
\begin{align*}
\Tr(S_\chi)&=\sum_{m\in \Z}\int_{[0,1]}\int_{\frac\tau 2+[0,1]}\chi(A)^m\Harm(x-m,dy,U(-t,t))\Harm(y,dx,U(0,2t))\\
&+\sum_{m\in \Z}\int_{[0,1]}\int_{-\frac\tau 2+[0,1]}\chi(A)^m\Harm(x-m,dy,U(-t,t))\Harm(y,dx,U(-2t,0))\\
&+\chi(B)\sum_{m\in \Z}\int_{\tau+[0,1]}\int_{\frac\tau 2+[0,1]}\chi(A)^m\Harm(x-m,dy,U(-t,t))\Harm(y,dx,U(0,2t))\\
&+\overline{\chi(B)}\sum_{m\in \Z}\int_{-\tau+[0,1]}\int_{-\frac\tau 2+[0,1]}\chi(A)^m\Harm(x-m,dy,U(-t,t))\Harm(y,dx,U(-2t,0))\end{align*}
so that everything is expressed in terms of harmonic measure in strips (and likewise for $\Tr(S_\chi^\delta)$). In terms of the lift of the Brownian motion to $\C$ started at $B_0$ with $\Im B_0=0$, this corresponds to the four possibilities:
$$(\Im B_{\sigma_2},\Im B_{\sigma_{21}})=(t,0),(-t,0),(t,2t),{\rm\ or\ }(-t,-2t).$$
Up to a small (uniformly in $\delta$) error, one may replace the $\sum_{m\in\Z}$ by finite truncated sums and the strips by long rectangles.

We are left with proving convergence of a term of the type 
$$\sum_{x\in\gamma^\delta_1}\sum_{y\in\gamma^\delta_2}\Harm_\delta(x-m,\{y\},R)\Harm_\delta(y,\{x\},R+it)$$
where $R$ is a long rectangle and $m\in\Z$ is fixed. For simplicity of notation set $m=0$. From the convergence of Poisson kernels (Lemma 1.2 of \cite{yadin_lerw}), it follows that for any $\eps>0$, there is $L>0$ such that
\begin{equation}\label{eq:harnackest}
\left|\frac{\Harm_\delta(x',\{y\},R)}{\Harm_\delta(x,\{y\},R)}-1\right|\leq\eps
\end{equation}
for all $x,x'\in\gamma_1$ with $|x-x'|\leq L^{-1}$, $y\in\partial R$ and $\delta$ small enough. (By contrast, notice that $y\mapsto\log\Harm_\delta(x,\{y\},R)$ is typically highly oscillatory). Let us partition $\gamma_1^\delta$, $\gamma_2^\delta$ into $O(L)$ intervals $I^\delta_i$, $J^\delta_j$ of diameter $\leq L^{-1}$; and pick a point $x^\delta_i$, $y^\delta_j$ in any such interval. We may assume $(I^\delta_i)_i$ converges to a partition $(I_i)_i$ of $\gamma_1$, and similarly for $(J^\delta_j)$; we also assume that $x_i^\delta\rightarrow x_i$, $y_j^\delta\rightarrow y_j$ as $\delta\searrow 0$. 
Then
\begin{align*}
\sum_{x\in\gamma^\delta_1}\sum_{y\in\gamma^\delta_2}\Harm_\delta(x-m,\{y\},R)\Harm_\delta(y,\{x\},R+it)\hphantom{============}\\
=\left(\sum_{i,j}\Harm_\delta(x_i^\delta,J_j^\delta,R)\Harm_\delta(y_j^\delta,I^\delta_i,R+it)\right)(1+O(\eps)).
\end{align*}
By Lemma \ref{Lem:harmmeas}, the sum in the RHS converges (for fixed $L$, as $\delta\searrow 0$) to 
$$\sum_{i,j}\Harm(x_i,J_j,R)\Harm(y_j,I_i,R+it).$$
Moreover, we can pick $L$ large enough so that this sum is within $\eps$ of
$$\int_{[0,1]}\int_{\frac\tau 2+[0,1]}\Harm(x,dy,R)\Harm(y,dx,R+it)$$
by Riemann sum approximation and the Harnack estimate \eqref{eq:harnackest}. For general $k$, one expresses $\Tr(S_\chi^k)$ as a sum of $4^k$ $(2k)$-fold integrals. This iterated trace corresponds to loops traveling $k$ times back and forth between $\gamma_1$ and $\gamma_2$; each traversal from $\gamma_1$ to $\gamma_2$ (or vice versa) is either upward or downward. The convergence for any fixed $k$ is handled as in the case $k=1$, which concludes the argument.
\end{proof}

\subsection{Conclusion}

We may now complete the proof of Proposition \ref{Prop:detconv}.
\begin{proof}[Proof of Proposition \ref{Prop:detconv}.]
By Lemma \ref{Lem:Fred}, 
$$\frac{\det\Lap^{\delta}_{\chi'}}{\det\Lap^{\delta}_\chi}=\frac{\det(\Id-S_{\chi'}^{\delta})}{\det(\Id-S_\chi^{\delta})}$$
If $\chi(A)=\chi'(A)$. 

Lemma \ref{Lem:contract} justifies the expansion
\begin{equation}\label{eq:propdetconv}
-\log\det(\Id-S_\chi^{\delta})=\sum_{k\geq 1}\frac 1k\Tr((S_\chi^{\delta})^k)
\end{equation}
where the sum converges exponentially fast.

Consider $S_\chi^{\delta}$ as an operator $L^1_\chi(\gamma_1^\delta,\mu_\delta)\rightarrow L^\infty_\chi(\gamma_1^\delta)$, where $\mu_\delta$ is the harmonic measure on $\gamma_1^\delta$ seen from a reference point on $\gamma_2^\delta$, say. From Lemma 1.2 in \cite{yadin_lerw}, we see that for any $x,x'\in\gamma_2^\delta,y\in\gamma_1^\delta$
$$\frac{\Harm_\delta(x',\{y\},\Gamma_\delta\setminus\gamma_1^\delta)}{\Harm_\delta(x,\{y\},\Gamma_\delta\setminus\gamma_1^\delta)}\leq C$$
for some constant $C$ independent of $\delta$. It follows that $\vertiii{S_\chi^{\delta}}_{L^1\rightarrow L^\infty}\leq C$, and that (Lemma \ref{Lem:contract}) $\vertiii{(S_\chi^{\delta})^k}_{L^1\rightarrow L^\infty}$ decays exponentially uniformly in $\delta$.

It is easy to check that for any (finite-dimensional) operator $S$, 
$$|\Tr(S)|\leq\vertiii{S}_{L^1(\mu)\rightarrow L^\infty}\|\mu\|_{TV}$$ 
($\|.\|_{TV}$ denotes the total variation). Indeed,
\begin{align*}
|\Tr(S)|&\leq\sum_{i=1}^d |S_{ii}|\leq\sum_{i=1}^d\|Se_i\|_\infty\\
&\leq\sum_{i=1}^d\vertiii{S}_{L^1(\mu)\rightarrow L^\infty}\|e_i\|_{L^1(\mu)}
\leq\vertiii{S}_{L^1(\mu)\rightarrow L^\infty}\|\mu\|_{TV}
\end{align*}
where $d$ is the dimension of $S$.

Consequently, the convergence of the sum in \eqref{eq:propdetconv} is uniform in $\delta$. 
Lemma \ref{Lem:trace} gives term-wise convergence. This gives the result with the condition $\chi(A)=\chi'(A)$ (e.g. upon setting $h(\chi)=\det(\Id-S_\chi)/\det(\Id-S_{{\chi _0}})$ for a reference character $\chi_0$). Applying this twice (with an affine change of coordinates or equivalenty a change of homology basis $(A,B)\rightarrow (B,-A)$ to exchange the roles of the $A$ and $B$ cycles) gives the general case.
\end{proof}

\section{The case of the square lattice}\label{Sec:square}

From Proposition \ref{Prop:detconv} we know that, in order to identify the limit, it is enough to work with any specific sequence of graphs of our choosing. For this, one could use the results of \cite{BdT_loop} (on the hexagonal lattice and for $\Re\tau=0$), or of \cite{KSW_torus} (Theorem 3, for general $\tau$ and bipartite periodic graphs); these are based on spectral arguments, i.e. on estimating the eigenvalues of the Kasteleyn matrix twisted by a character. Alternatively, one could use Theorem 7 \cite{Dub_tors} (Section 5, for general $\tau$ and isoradial graphs), which follows from variational arguments (viz. estimating the variation of the determinant of a twisted Kasteleyn matrix w.r.t. the characters). Another possible approach, involving more machinery, is based on relating the Fredholm determinant $\det(\Id-S_\chi)$ (where $S_\chi$ is the trace-class operator of \eqref{eq:Schicont}) to $\zeta$-regularized determinants and using \cite{RS_analytic}.

For the sake of variety and self-containedness, we give (yet) another, shorter proof for the square lattice, based on heat kernel convergence; remark that such estimates, at fixed (microscopic) location and fixed time, are not available under the general assumption \eqref{eq:weakconv}.

So let us take $\delta^{-1}\in\N$, and
$$\Gamma_\delta=\left(\delta\Z^2\right)/(\Z+\tau_\delta\Z)$$
where $\tau_\delta=\tau+o(1)$ (one could also fix $\tau_\delta$ and apply a small affine distortion to the square lattice; we shall omit the dependence of $\tau$ on $\delta$ from now on). Let $\Lambda$ denote the lattice $(\Z+\tau\Z)\simeq\pi_1(\Sigma)$ (so that $(1,\tau)$ corresponds to $([A],[B])$). The weights are, say, $\frac 14$ on all edges (nearest neighbors), so that the corresponding RW is a simple random walk (SRW) and $\Lap$ is the discrete Laplacian:
$$(\Lap^{\delta} f)(z)=f(z)-\frac 14\left(f(z+\delta)+f(z-\delta)+f(z+i\delta)+f(z-i\delta)\right).$$ 
Let $\chi:\pi_1(\Sigma)\rightarrow\U$ a non-trivial character. Then $\Lap_\chi^{\delta}$ has a complete set of eigenvectors of the type
$$f(x+iy)=\exp(i(\alpha x+\beta y))$$
where $e^{i\alpha}=\chi(1)$, $e^{i(\alpha\Re\tau+\beta\Im\tau)}=\chi(\tau)$. It follows that, by writing $\Lap^\delta_\chi=\Id-P^\delta_\chi$, the eigenvalues of $P^\delta_\chi$ have modulus $<1$ (one can also obtain $\vertiii{P_\chi^\delta}_\infty<1$ using the maximum principle). This justifies the expansion
\begin{equation}\label{eq:trexp}
-\log\det(\Lap_\chi^{\delta})=\sum_{k\geq 1}\frac 1k\Tr((P^{\delta}_\chi)^k)
\end{equation}
(with convergence for fixed $\delta$ but not uniformly in $\delta$). Concretely, if $x,y$ are in a fundamental domain and $P^{\delta}$ denotes the transition matrix for the SRW  on $\delta\Z^2$, we have
$$(P^{\delta}_\chi)^k(x,y)=\sum_{\ell\in\Lambda}\chi(\ell)(P^{\delta})^k(x,y+\ell).$$
We shall need a few standard estimates on $P^\delta$. 
Let $t=k/n^2$, $(P_t)_{t\geq 0}$ be the heat kernel, the semigroup of standard Brownian motion on $\C$ and $p_t$ its density:
$$p_t(x,y)=\frac{1}{2\pi t}\exp(-\frac{|y-x|^2}{2t})$$
With $\delta=1/n$ and when $n$ is even (we take $2k$ and $n$ even in order to avoid essentially notational issues due to parity), we have the Local Central Limit theorem
\begin{equation}\label{eq:LCLT}
\begin{split}
(P^{\delta})^{2k}(x,y)&=\frac 2{n^2}p_t(x,y)(1+o(1)){\rm\ \ \ \ \ if\ }n|y-x|=o(k^{3/4})\\
&=\frac 2{n^2}p_t(x,y)(1+O(1)){\rm\ \ \ \ \  if\ }n|y-x|=O(k^{3/4})
\end{split}
\end{equation}
which is valid in a diffusive (i.e. $|y-x|=O(\sqrt t)$) and up to a moderate deviation regime. See e.g. Proposition 2.5.3 in \cite{LawLim} (simply using Stirling's formula and the usual trick of projecting the $2d$ SRW $(X,Y)$ on the diagonals to obtain independent 1d SRWs $X+Y$ and $X-Y$).

At larger scales, one can use the following large deviation estimate
\begin{equation}\label{eq:ldev}
(P^{\delta})^{2k}(x,y)=O(\exp(-c(|x-y|n)^2/k))
\end{equation}
which follows e.g. from the exponential Chebychev inequality ($c$ is a positive constant).

Finally, we shall need a first difference estimate (see e.g. Theorem 2.3.6 in \cite{LawLim}):
\begin{equation}\label{eq:firstdiff}
(P^{\delta})^{2k}(x,y+1)-(P^{\delta})^{2k}(x,y)=O(n^{-2}t^{-3/2}).
\end{equation}

From \eqref{eq:trexp}, we may write
\begin{equation}\label{eq:trexp2}
\begin{split}
-\log\frac{\det(\Lap_{\chi'}^{\delta})}{\det(\Lap_\chi^{\delta})}&=\sum_{k\geq 1}\frac 1k\left(\Tr((P^{\delta}_\chi)^k)-\Tr((P^{\delta}_{\chi'})^k)\right)\\
&=\sum_{k\geq 1}\frac 1k\sum_{x\in\Sigma}\sum_{\ell\in\Lambda}\left(\chi(\ell)-\chi'(\ell)\right)(P^{\delta})^k(x,x+\ell)\\
&=\sum_{k\geq 1}\frac 1{2k}\sum_{x\in\Sigma}\sum_{\ell\in\Lambda}\left(\chi(\ell)-\chi'(\ell)\right)(P^{\delta})^{2k}(x,x+\ell).
\end{split}
\end{equation}
Remark that $\chi(\ell)-\chi'(\ell)=0$ when $\ell=0$, so that the summand vanishes for $k=o(n)$. The last line follows from assuming $n$ even, $n\tau\in2(\Z+i\Z)$.

We can discuss convergence of the sum in \eqref{eq:trexp2} as $n\rightarrow\infty$ for small, intermediate, and long times. 
\begin{enumerate}
\item
For short times, the large deviation estimate \eqref{eq:ldev} guarantees that
 $$\sum_{k\leq \lfloor n^{3/2}\rfloor}(\dots)=\sum_{k\leq \lfloor n^{3/2}\rfloor}O(k^{-1}n^2\exp(-c'n^2/k))$$
which goes to zero as $n\rightarrow\infty$ ($c'>0$ is a positive constant).
 
\item
For intermediate times, say $\lfloor n^{3/2}\rfloor<k\leq \lfloor Tn^{2}\rfloor$ with $T$ large but independent of $n$, the random walk quantities converge to their natural continuous counterpart by the LCLT \eqref{eq:LCLT}:
$$\sum_{\ell\in\Lambda}\left(\chi(\ell)-\chi'(\ell)\right)(P^{\delta})^{2k}(x,x+\ell)=\frac{2}{n^2}\sum_{\ell\in\Lambda}(\chi(\ell)-\chi'(\ell))\frac{e^{-|\ell |^2/2t}}{2\pi t}+o(1)$$
where $t=2k/n^2$. (More precisely, we use the LCLT for $n|\ell|=o(k^{3/4})$ and the large deviation estimate \eqref{eq:ldev} otherwise).

Remark that $|\Gamma_\delta|\sim n^2\Im\tau$. It follows that
$$\sum_{\lfloor n^{3/2}\rfloor<k\leq \lfloor Tn^{2}\rfloor}(\dots)=\Im\tau\int_0^T\left(\sum_{\ell\in\Lambda}(\chi(\ell)-\chi'(\ell))e^{-|\ell |^2/2t}\right)\frac{dt}{2\pi t^2}+o(1).$$
The convergence of the integral as $T\rightarrow\infty$ is not immediately apparent but can be justified using e.g. the Poisson summation formula, see Section 4 of~\cite{RS_analytic}.

\item
Finally we need to control $\sum_{k>\lfloor Tn^{2}\rfloor}(\dots)$ uniformly in $n$. For such a $k$, let $B_k$ (resp. $B'_k$) be a box of diameter $O(t^{1/2+\eps})$ (resp. $O(n^{-1}k^{3/4})$) around the origin, so that $B_k\subset B'_k$. 

Inside $\Lambda\cap B_k$ (which contains $O(t^{1+2\eps})$ points), we use an Abel summation by part argument. Assume $\eta=\chi(1)\neq 1$ (otherwise reason on $\chi(\tau)$). Then
$$\chi(\ell)=(\eta-1)^{-1}(\chi(\ell+1)-\chi(\ell))$$
so that
\begin{align*}
\sum_{\Lambda\cap B_k}\chi(\ell)(P^\delta)^{2k}(x,x+\ell)&=(1-\eta)^{-1}\sum_{\ell\in\Lambda\cap B_k}\chi(\ell)\left((P^{\delta})^{2k}(x,y+\ell+1)-(P^{\delta})^{2k}(x,y+\ell)\right)\\
&+({\rm boundary\ terms}).
\end{align*}
Then from \eqref{eq:firstdiff}, we obtain
$$\sum_{\Lambda\cap B_k}\chi(\ell)(P^\delta)^{2k}(x,x+\ell)=O(t^{1+2\eps}n^{-2}t^{-3/2})$$
(the much smaller contribution of boundary terms is handled by \eqref{eq:ldev}). 

In $B'_k\setminus B_k$, we use the LCLT \eqref{eq:LCLT} to obtain 
\begin{align*}
\sum_{\Lambda\cap (B'_k\setminus B_k)}\chi(\ell)(P^\delta)^{2k}(x,x+\ell)&=O\left(\sum_{\lfloor t^{1/2+\eps}\rfloor\leq m\leq \lfloor\frac{k^{3/4}}n\rfloor}
\frac{m}k\exp(-c'm^2/t))
\right)\\
&=O(n^{-2}\exp(-c't^{2\eps}))
\end{align*}
for some $c'>0$ (depending on $\tau$); as is customary, the exact value of $c'$ may change from line to line.

Finally, outside $B'_k$ we have by \eqref{eq:ldev}
$$\sum_{\Lambda\cap (B'_k)^c}\chi(\ell)(P^\delta)^{2k}(x,x+\ell)=O\left(\sum_{m\geq \lfloor \frac{k^{3/4}}n\rfloor} m\exp(-c'm^2/t)\right)=O(t\exp(-c'\sqrt k))$$
for some $c'>0$.

Consequently,
$$\frac{1}{2k}\sum_{x\in\Sigma}\sum_{\Lambda}(\dots)=O(t^{2\eps-1/2})+O(\exp(-c't^{2\eps})+O(k\exp(-c'\sqrt k))$$
and
\begin{align*}
\sum_{k>\lfloor Tn^{2}\rfloor}(\dots)&=O\left(\sum_{k>\lfloor Tn^{2}\rfloor}k^{-1}(kn^{-2})^{2\eps-1/2}\right)
+O\left(\sum_{k>\lfloor Tn^{2}\rfloor}\exp(-c'\sqrt k)\right)\\
&=O(T^{2\eps-\frac 12})+o(1)
\end{align*}
with $o(1)$ going to zero as $Tn^2$ goes to infinity.
\end{enumerate}

In conclusion we have
$$\frac{\det(\Lap_{\chi'}^{\delta})}{\det(\Lap_\chi^{\delta})}\longrightarrow\exp\left(-\Im\tau\int_0^\infty\frac{dt}{2\pi t^2}\sum_{\ell\in\Lambda}(\chi(\ell)-\chi'(\ell))e^{-|\ell |^2/2t}\right).$$

This is a classical quantity, which is evaluated in Theorem 4.1 of \cite{RS_analytic} (as a reformulation of Kronecker's second limit formula in analytic number theory), as we now explain.

Let $T(\chi)$ be the {\em analytic torsion} of the unitary bundle associated with the character $\chi$ (\cite{RS_analytic}); for our purposes we can take as definition of $T$, up to multiplicative constant,
\begin{equation}\label{eq:deftors}
\begin{split}
\frac{T(\chi')}{T(\chi)}&=\exp\left(-\frac 12\int_0^\infty\frac{dt}t\int_\Sigma\sum_{\ell\in\Lambda}(\chi'(\ell)-\chi(\ell)) p_t(x,x+\ell)dA(x)\right)\\
&=\exp\left(-\frac{\Im\tau}2\int_0^\infty\frac{dt}{2\pi t^2}\sum_{\ell\in\Lambda}(\chi'(\ell)-\chi(\ell))e^{-|\ell |^2/2t}\right)
\end{split}
\end{equation}
(here $dA$ represents the area measure on the torus) and we have established
$$\frac{\det(\Lap_{\chi'}^{\delta})}{\det(\Lap_\chi^{\delta})}\longrightarrow\frac{T(\chi')^2}{T(\chi)^2}.$$
Remark that $T(\chi)^2$ is also, by definition \cite{RS_analytic}, the {\em $\zeta$-regularized determinant} of $\Lap_\chi$ (the continuous Laplacian operating on $\chi$-multivalued function); the argument above thus establishes the convergence of determinants of discrete Laplacians to $\zeta$-regularized determinants of continuous Laplacians (in a relative or projective sense).

Let $u,v\in [0,1)$ be s.t. $\chi(m\tau+n)=\exp(2i\pi(mu+nv))$. Then Theorem 4.1 of \cite{RS_analytic} provides the following evaluation of $T(\chi)$:
$$T(\chi)=|\eta(\tau)^{-1}e^{i\pi v^2\tau}\theta(u-v\tau|\tau)|$$
where $\eta$ is the Dedekind $\eta$ function ($q=e^{i\pi\tau}$):
\begin{equation}\label{eq:eta}
\eta(\tau)=q^{1/12}\prod_{n=1}^\infty(1-q^{2n})
\end{equation}
and $\theta(.|\tau)$ is the odd $\theta$ function (conventions as in \cite{Chandra}):
\begin{equation}\label{eq:theta}
\begin{split}
\theta(w|\tau)&=-i\sum_{n=-\infty}^\infty(-1)^nq^{(n+\frac 12)^2}e^{(2n+1)i\pi w}\\
&=q^{1/6}\eta(q)2\sin(\pi w)\prod_{n=1}^\infty(1-q^{2n}e^{2i\pi w})(1-q^{2n}e^{-2i\pi w}).
\end{split}
\end{equation}

This completes the proof of Proposition \ref{Prop:square}.

\section{Identification of the limiting distribution}

Based on Propositions \ref{Prop:detconv} and \ref{Prop:square}, we can now identify the limiting distribution of $[\mf m]$, which encodes the winding of root cycles in pairs of dual CRSFs, or equivalently the height change of the dimer configuration along cycles. The argument is rather similar to Section 5.2 of \cite{Dub_tors} and Section 4.C of \cite{ABMNV}; however we reason here based on Temperley's bijection and Forman's formula, rather than Kasteleyn's determinantal enumeration of dimers as in \cite{Dub_tors}.

Our goal is to identify the limiting distribution of $[{\mf m}]$ in the small mesh limit (recall \eqref{eq:cyclehomol}).

Let $k\geq 1$, and $\gamma$ a primitive element of $H_1(\Sigma,\Z)$ (i.e. $[\gamma]=m[A]+n[B]$, $\gcd(m,n)=1$). As is well-known, any (non-contractible) simple cycle is primitive. Let $Z_{k_+,k_-,[\gamma]}$ be the partition function (i.e. the sum of the weights) of oriented CRSFs on $\Gamma$ with exactly $k_+\geq 0$ (resp. $k_-$) cycles with class $[\gamma]$ (resp. $-[\gamma]$), $\gamma$ a primitive cycle (so that $Z_{k_+,k_-,[\gamma]}=Z_{k_-,k_+,-[\gamma]}$). (In higher genus, root cycles of CRSFs need not be homologous).

Given $F$ a CRSF on $\Gamma$ counted in $Z_{k_+,k_-,\gamma}$, there are $2^{k_++k_-}$ dual CRSFs $F^*$ on $\Gamma^*$, corresponding to possible choices of orientations of the root cycles. Recall that by construction, the conductances on $\Gamma^*$ are all 1. 

Let ${\mc Z}_{k[\gamma]}$ denote the dimer partition function on ${\mc G}$ restricted to $[\mf m]=k[\gamma]$, i.e.
$${\mc Z}_{k[\gamma]}=\sum_{\mf m:[\mf m]=k[\gamma]}w({\mf m}).$$
Since dimer configurations are in measure-preserving bijection with pairs $(F,F^*)$, we have
\begin{equation}\label{eq:binom}
{\mc Z}_{k[\gamma]}=\sum_{k_+,k_-\geq 0}Z_{k_+,k_-,[\gamma]}{k_++k_- \choose k+k_-}\end{equation}
for $k\neq 0$: indeed, if $F^*$ has $k+k_-$ (resp. $k_+-k$) cycles in the class $[\gamma]$ (resp. $-[\gamma]$), then $[{\mf m}]=k[\gamma]$.

We may look at $k[\gamma]\mapsto {\mc Z}_{k[\gamma]}$ as a function on the abelian group $H_1(\Sigma,\Z)\simeq\Z^2$ (with finite support for a given $\Gamma$). Its Fourier-Pontryagin transform is: 
\begin{equation}\label{eq:fourpont}
\begin{split}
\hat{\mc  Z}(\rho)&={\mc Z}_0+\sum_{k>0,\gamma}\rho(\gamma)^k {\mc Z}_{k[\gamma]}\\
&=\frac 12\sum_{k_+,k_-\geq 0,[\gamma]} Z_{k_+,k_-,[\gamma]}(1+\rho(\gamma))^{k_++k_-}\rho(\gamma)^{-k_-}\\
&=\frac 12\sum_{k_+,k_-\geq 0,[\gamma]} Z_{k_+,k_-,[\gamma]}(1+\rho(\gamma))^{k_+}(1+\rho(\gamma)^{-1})^{k_-}
\end{split}\end{equation}
where $\rho:H_1(\Sigma,\Z)\rightarrow\U$ is a unitary character and the sums are over primitive classes $[\gamma]$; the second line follows from \eqref{eq:binom} and the binomial formula. 

In particular, if $1$ denotes the trivial character, $\hat {\mc Z}(1)={\mc Z}$ is the partition function of the model, and 
\begin{equation}\label{eq:charfun}
\frac{\hat {\mc Z}(\rho)}{\hat {\mc Z}(1)}=\E(\rho([{\mf m}]))
\end{equation}
is the characteristic function of $[{\mf m}]$ ($\E$ denotes the expectation under the dimer measure).

From Forman's formula (Theorem \ref{Thm:Forman}) we have
\begin{equation}\label{eq:Forman}
\det(\Lap_\chi)=\frac 12\sum_{k_+,k_-,[\gamma]}Z_{k_+,k_-,[\gamma]}(1-\chi(\gamma))^{k_+}(1-\chi(\gamma)^{-1})^{k_-}
\end{equation}

Observe that there are four characters $\epsilon$ of $H_1(\Sigma,\Z)$ with $\epsilon^2=1$ ($\epsilon([A])=\pm 1$ and $\epsilon([B])=\pm 1$). 
If $\chi$ is any character, $\sum_\epsilon(\epsilon\chi)(\gamma)=0$ unless $\gamma=2\gamma'$ for some $\gamma'$, in which case $\sum_\epsilon(\epsilon\chi)(\gamma)=4\chi(\gamma)$. Moreover, for $\gamma$ primitive (or, more generally, $[\gamma]\neq 0$ in $H_1(\Sigma,\Z/2\Z)$),
\begin{equation}\label{eq:signs}
-\chi^k(\gamma)+\frac 12\sum_{\epsilon:\epsilon^2=1}(\epsilon\chi)^k(\gamma)=\left\{\begin{array}{ll} \chi^k(\gamma)&{\rm\ if\ } k{\rm\ even}\\
-\chi^k(\gamma)&{\rm\ if\ } k{\rm\ odd.}\end{array}\right.
\end{equation}
The expressions \eqref{eq:fourpont}, \eqref{eq:Forman} show that the expansions of $\hat {\mc Z}(\chi)$, $\det(\Lap_\chi)$ in powers of $\chi(\gamma)$ agree on even powers and are opposite on odd powers. It follows that:
\begin{equation}\label{eq:spin}
\sum_\epsilon \hat {\mc Z}(\epsilon\chi)=\sum_\epsilon \det(\Lap_{\epsilon\chi})
\end{equation}
and that
$$\hat {\mc Z}(\chi)=-\det(\Lap_\chi)+\frac 12\sum_\epsilon\det(\Lap_{\epsilon\chi}).$$
Parameterizing characters by $\chi(r\tau+s)=\exp(2i\pi ru+2i\pi sv)$, $(u,v)\in[0,1)^2$, the convergence result of Proposition \ref{Prop:square} shows that
$$\det(\Lap_\chi)=c_\delta(1+o(1))\left(|e^{i\pi v^2\tau}\theta(v-u\tau|\tau)|^2\right)$$
as the mesh $\delta\searrow 0$, for $\chi$ non-trivial (where the positive sequence $c_\delta$ does not depend on $\chi$). Remark that $\det(\Lap_\chi)=0$ at $\chi=1$ since constant functions are then in the kernel. 
It follows that the characteristic function \eqref{eq:charfun} of $[{\mf m}]$ converges pointwise as the mesh $\delta$ goes to zero. We now want to identify the limiting distribution.

Let us compute the Fourier transform of $\chi\mapsto h(\chi)=|e^{i\pi v^2\tau}\theta(v-u\tau|\tau)|^2$. First we write
\begin{align*}
h(\chi)&=e^{-2\pi v^2\Im\tau}\sum_{m,n}(-1)^{m+n}q^{(m+\frac 12)^2}\bar q^{(n+\frac 12)^2}e^{i\pi(2m+1)w-i\pi(2n+1)\bar w}\\
&=\sum_{r,m}(-1)^re^{2i\pi r u}q^{(m+\frac 12)^2-v(2m+1)+v^2}\bar q^{(m-r+\frac 12)^2-v(2m-2r+1)+v^2}\\
&=\sum_{r,m}(-1)^re^{2i\pi r u}q^{(v-m-\frac 12)^2}\bar q^{(v-m+r-\frac 12)^2}
\end{align*}
where, as earlier, $q=e^{i\pi\tau}$. Applying the Poisson summation formula 
$$\sum_{m\in\Z}\phi(v+m)=\sum_{s\in\Z} e^{2i\pi sv}\hat\phi(s)$$
to the rapidly decaying function $\phi(v)=q^{(v-\frac 12)^2}\bar q^{(v+r-\frac 12)^2}=|q|^{2(v+\frac {r-1}2)^2+\frac{r^2}2}e^{-2i\pi\Re(\tau) r(v+\frac{r-1}2)}$, such that:
\begin{align*}
\hat\phi(s)&=\int_{-\infty}^\infty \phi(t)e^{-2i\pi st}dt%
=|q|^{\frac{r^2}2}\exp(-2i\pi s\frac{r-1}2)\int_{-\infty}^\infty \exp(-2\pi\Im\tau t^2-2i\pi(r\Re\tau+s)t)dt\\
&=|q|^{\frac{r^2}2}(-1)^{s(r-1)}\frac{1}{\sqrt{2\Im\tau}}\exp\left(-\frac\pi 2 (r\Re\tau+s)^2/\Im\tau\right)=(-1)^{s(r-1)}\frac{1}{\sqrt{2\Im\tau}}\exp\left(-\frac\pi {2\Im\tau} |r\tau+s|^2\right)
\end{align*}
one obtains:
$$h(\chi)=\sum_{r,s\in\Z}\chi(r\tau+s)(-1)^{(s-1)(r-1)+1}\frac{1}{\sqrt{2\Im\tau}}\exp\left(-\frac\pi {2\Im\tau} |r\tau+s|^2\right).$$
The sign in the summand is $-1$ unless $r,s$ are both even; remark that as in \eqref{eq:signs}
$$-1+\frac 12\sum_{\epsilon:\epsilon^2=1}\epsilon(r\tau+s)=\left\{\begin{array}{ll} 1&{\rm\ if\ } r=s=0\mod 2\\
-1&{\rm\ otherwise.}\end{array}\right.$$
Comparing with \eqref{eq:spin} gives
$$\lim_{\delta\searrow 0}c_\delta^{-1}\hat{\mc Z}(\chi)=|\eta(\tau)|^{-2}\sum_{r,s}\chi(r\tau+s)\frac{1}{\sqrt{2\Im\tau}}\exp\left(-\frac\pi {2\Im\tau} |r\tau+s|^2\right).$$
Pointwise convergence of the characteristic function \eqref{eq:charfun} yields convergence in law for $[{\mf m}]$. This concludes the proof of Theorem \ref{Thm:discrg}.

\bibliographystyle{abbrv}
\bibliography{bibliowind}

\end{document}